\newtheorem{thm}{Theorem}[section]
\newtheorem{rem}{Remark}[section]
\newtheorem{lem}{Lemma}[section]
\newcommand{\be}{\begin{equation}}
	\newcommand{\ee}{\end{equation}}
\newcommand{\beas}{\begin{eqnarray*}}
	\newcommand{\eeas}{\end{eqnarray*}}
\newcommand{\bea}{\begin{eqnarray}}
	\newcommand{\eea}{\end{eqnarray}}
\numberwithin{equation}{section}
\begin{document}
	\setcounter{page}{1}
	
\title[Bohr-Rogosinski inequalities involving Schwarz functions]{Bohr-Rogosinski inequalities involving Schwarz functions}
	
	\author[S. Ahammed and M. B. AHamed]{Sabir Ahammed and Molla Basir Ahamed$ ^* $}
	
	\address{Molla Basir Ahamed, Department of Mathematics, Jadavpur University, Kolkata-700032, West Bengal, India}
	\email{mbahamed.math@jadavpuruniversity.in}
	
		\address{Sabir Ahammed, Department of Mathematics, Jadavpur University, Kolkata-700032, West Bengal, India
		}
	\email{sabira.math.rs@jadavpuruniversity.in}
	
	\subjclass{Primary 30A10, 30H05, 30C35, Secondary 30C45}
	
	\keywords{Bounded analytic functions, Bohr inequality, Bohr-Rogosinski inequality, Schwarz functions, multidimensional Bohr, Schwarz-Pick lemma}
	\date{09-12-2023, File Name: Bohr-Oper-AA-P1.
		\newline\indent $^{1}$ Corresponding author}
	
\begin{abstract} 
In this paper, we study Bohr's inequality and refined versions of Bohr-Rogosinski inequalities involving Schwarz functions. Moreover, we establish a version of multidimensional analogue of Bohr inequality and Bohr-Rogosinski inequalities involving Schwarz functions. Finally, we establish a multidimensional analogue of the refined version of Bohr inequalities with the initial coefficient being zero. All the results are proved to be sharp. 
\end{abstract} \maketitle
	
\section{Introduction}
Let $ \mathcal{H}_{\infty} $ denote the space of all bounded analytic functions $f$ in the unit disk $ \mathbb{D}:=\{z\in\mathbb{C} : |z|<1\} $
equipped with the topology of uniform convergence on compact subsets of $ \mathbb{D} $ with the supremum norm $||f||_{\infty}:=\sup_{z\in \mathbb{D}}|f(z)|$. We define a subclass $ \mathcal{B} $ of $ \mathcal{H}_{\infty} $ as follows 
\begin{align*}
	\mathcal{B}:=\{f\in\mathcal{H}_{\infty} : f(z)=\sum_{n=0}^{\infty}a_nz^n\; \mbox{with}\; ||f||_{\infty}\leq1\}.
\end{align*} The improved version of the classical Bohr's inequality \cite{Bohr-1914} states that if $ f\in\mathcal{B} $, then the associated majorant series $ M_f(r):=\sum_{n=0}^{\infty}|a_n|r^n\leq 1 $ holds for $ |z|=r\leq 1/3 $ and the constant $ 1/3 $ cannot be improved. The constant $ 1/3 $ is famously known as the Bohr radius and the inequality $ M_f(r)\leq 1 $ is known as the Bohr inequality for the class $ \mathcal{B} $. \vspace{1.2mm}

Generalization of the classical Bohr inequality is now an active area of research and is extensively studied in recent years for different classes of functions. In the literature, Bohr’s power series theorem has been studied in many different situations what is called Bohr phenomenon. In the study of Bohr phenomenon, the initial term $ |a_0| $ in the majorant series $ M_f(r) $ plays a crucial role. For instance, Bohr's theorem has been studied by introducing the term $ |a_0|^p $, where $ 0<p\leq 2 $, instead of $ |a_0| $, with the corresponding best possible radius $ p/(p+2) $ (see \cite{Liu-Ponnusamy-PAMS-2021,Ponnusamy-JMAA-2022} and references therein). In certain cases, if  $|a_0|$ is replaced by $|f(z)|$, then the constant $1/3$ could be replaced by $\sqrt{5}-2$ which is best possible (see  \cite{Alkhaleefah-Kayumov-Ponnusamy-PAMS-2019}).  Detailed account of research on Bohr radius problem can be found in the survey article \cite{Ponnusmy-Survey} and references therein. Actually, Bohr’s theorem received greater interest after it was used by Dixon \cite{Dixon & BLMS & 1995} to characterize Banach algebras that satisfy von Neumann inequality. The generalization of Bohr’s inequality with different suitable functional settings is now an active area of research in function theory: for instance, Aizenberg \textit{et al.} \cite{aizenberg-2001}, and Aytuna and Djakov \cite{Aytuna-Djakov-BLMS-2013} have studied the Bohr property of bases for holomorphic functions; Ali \textit{et al.} \cite{Ali-Abdul-NG-CVEE-2016} have found the Bohr radius for the class of starlike log-harmonic mappings; while Paulsen \textit{et al.} \cite{Paulsen-PLMS-2002} extended the Bohr inequality to Banach algebras; Hamada \emph{et al.}\cite{Hamada-IJM-2009} have studied the Bohr's theorem for holomorphic mappings with values in homogeneous balls; Galicer \emph{et al.}\cite{Galicer-Mansilla-Muro-TAMS-2020} have studied mixed Bohr radius in several complex variables, and many authors studied Bohr-type inequalities for different class of functions (see \cite{Alkhaleefah-Kayumov-Ponnusamy-PAMS-2019,Allu-Arora-JMAA-2022,Allu-CMB-2022,Das-JMAA-2022,Lata-Singh-PAMS-2022,S. Kumar-PAMS-2022,Kumar-JMAA-2023} and references therein.)  However, it can be noted that not every class of functions has the Bohr phenomenon, for example, B\'an\'at\'aau \emph{et al.} \cite{Beneteau-2004} showed that there is no Bohr phenomenon in the Hardy space $ H^p(\mathbb{D},X), $ where $p\in [1,\infty).$ The concept of the Bohr radius of a pair of operators is introduced recently and in terms of the convolution function, a general formula for calculating the Bohr radius of the
Hadamard convolution type operator with a fixed initial coefficient is obtained in \cite{Khasyanov-JMAA-2024}. \vspace{1.2mm}

Schwarz function has numerous applications in the theory of complex functions. The Schwarz function of a curve in the complex plane $ \mathbb{C} $ is an analytic function that maps points of the curve to their complex conjugates. It can be used to generalize the \textit{Schwarz reflection principle} to reflection across arbitrary analytic curves, not just across the real axis. The Schwarz function exists for analytic curves. More precisely, for every non-singular, analytic Jordan arc $ \Gamma $  in $ \mathbb{C} $, there is an open neighborhood $ \Omega $  of $ \Gamma $  and a unique analytic function $ \widehat{\omega} $ on $ \Omega $  such that $ \widehat{\omega}(z)=\bar{z} $ for every $ z\in\Gamma $. In this paper, our aim is to establish sharp multidimensional Bohr-Rogosinski inequalities involving Schwarz functions. \vspace{1.2mm}

Similar to the Bohr radius, there is also a concept of Rogosinski radius. According to Rogosinski (see \cite{Rogosinski-1923} for details), the Rogosinski radius is defined as follows: if $f(z)=\sum_{n=0}^{\infty} a_{n}z^{n}\in\mathcal{B},$ and its corresponding partial sum of $f$ is defined by $S_{N}(z):=\sum_{n=0}^{N-1} a_{n}z^{n}$, then for every $N \in\mathbb{N}$,	$|S_{N}(z)|\leq1\; \mbox{for}\; |z|\leq {1}/{2}$. The radius $1/2$ is sharp and is known as the Rogosinski radius (see e.g. \cite{Schur-Szego-1925}). With the idea of the Rogosinski radius, Kayumov \emph{et al.} \cite{Kayumov-Khammatova-Ponnusamy-JMAA-2021} have considered a relevant quantity known as the Bohr-Rogosinski sum $R_{N}^{f}(z)$ which is defined by 
\begin{equation}\label{e-1.1}
	R_{N}^{f}(z):=|f(z)|+ \sum_{n=N}^{\infty} |a_{n}||z|^{n}.
\end{equation}
Here, the Bohr-Rogosinski radius is the largest number $ r_N>0 $ such that  $ R_{N}^{f}(z)\leq 1 $ for $ |z|\leq  r_N $, where $r_N$ is the positive root of the equation $2(1+r)r^N-(1-r)^2=0$.
The Bohr-Rogosinski inequality \eqref{e-1.1} was also generalized by replacing the Taylor coefficients $a_n$ partially or completely by higher order derivatives in \cite{Liu-Shang-Xu-JIA-2018}. Moreover, Liu \cite{Gang Liu-JMAA-2021} has generalized the Bohr-Rogosinski inequality by replacing the Taylor coefficient $a_n$ of $f$ by $f^n(\widehat{\omega}_n(z))/n!$ and $r^m$ by $|\widehat{\omega}_m(z)|$ in part or in whole, where both $\widehat{\omega}_n$ and $\widehat{\omega}_m$ are some Schwarz functions. For background information related to Bohr-type inequality involving Schwarz functions, we refer to the recent papers \cite{Allu-Arora-JMAA-2022,Huang-Liu-Ponnu-AMP-2020,Hu-Wang-Long-BMMSS-2021} and references therein.\vspace{1.2mm}

Refining Bohr inequality or Bohr-Rogosinski inequality for certain class of functions showing them sharp are of independent interests to the researchers. Liu \emph{et al.} \cite{Liu-Liu-Ponnusamy-2021} and Ponnusamy \emph{et al.} \cite{Ponnusamy-Vijayak-Wirths-RM-2020} have established independently several refined version of Bohr-Rogosinski inequality in the case of bounded analytic functions on $ \mathbb{D} $. For certain class of harmonic mappings,  Bohr inequality, Bohr-Rogosinski inequality and their sharp refinements are extensively studied in \cite{Aha-CMFT-2022,Aha-CVEE-2022,Aha-Aha-CMFT-2023,Aha-Allu-BMMSS-2022,Aha-Allu-MSCN-2023,Aha-Allu-RMJ-2022,Ahamed-AMP-2021,Ahamed-CVEE-2021}. We now recall a couple of results that we want to establish in multidimensional settings involving Schwarz functions. Henceforth, for a function of the form  $f(z)=\sum_{n=0}^{\infty}a_nz^n\in\mathcal{B},$ $N\in\mathbb{N}$ and $p=1,2,$ we define the functional $ \mathcal{A}^t_{p,f}(r) $ by
\begin{align*}
	\mathcal{A}^t_{p,f}(r):=|f(z)|^p+\sum_{n=N}^{\infty}|a_n|r^n+sgn(t)\sum_{n=1}^{t}\dfrac{r^N}{1-r}+\left(\dfrac{1}{1+|a_0|}+\dfrac{r}{1-r}\right)\sum_{n=t+1}^{\infty}|a_n|^2r^{2n}.
\end{align*}
\begin{thm}\label{Thm-1.1}\cite{Liu-Liu-Ponnusamy-2021}
	For $N\in\mathbb{N},$ let $t=\lfloor (N-1)/2 \rfloor .$ If $f(z)=\sum_{n=1}^{\infty}a_nz^n\in \mathcal{B},$ then  $\mathcal{A}^t_{1,f}(r)\leq 1$ for $|z|=r\leq R_N,$ where $R_N$ is the unique positive root of equation $2(1+r)r^N-(1-r)^2=0.$  Moreover,
	$\mathcal{A}^t_{2,f}(r)\leq 1$
	for $r\leq R^{\prime}_N,$ where $ R^{\prime}_N$ is the unique positive root of equation $(1+r)r^N-(1-r)^2=0.$ The radii $R_N$ and $ R^{\prime}_N$ both are best possible.	
\end{thm}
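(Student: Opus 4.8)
The hypothesis $f(z)=\sum_{n=1}^{\infty}a_{n}z^{n}$ is the condition $a_{0}=0$, and this is exactly what pays for the extra $\mathrm{sgn}(t)$--term, so I would begin by cashing it in. Since $f(0)=0$ and $\|f\|_{\infty}\le 1$, the Schwarz lemma gives $f(z)=z\,g(z)$ with $g\in\mathcal{B}$; writing $g(z)=\sum_{n=0}^{\infty}b_{n}z^{n}$ we have $b_{n}=a_{n+1}$, in particular $b_{0}=a_{1}$. Put $a:=|a_{1}|\in[0,1]$. The Schwarz--Pick lemma applied to $g$ furnishes the three estimates that drive everything: $|a_{n}|=|b_{n-1}|\le 1-a^{2}$ for $n\ge 2$; $\sum_{n\ge 2}|a_{n}|^{2}=\sum_{m\ge 1}|b_{m}|^{2}\le 1-a^{2}$; and, for $|z|=r$, the pointwise bound $|f(z)|=r\,|g(z)|\le r\,\dfrac{a+r}{1+ar}$ (with the cruder $|f(z)|\le r$ also available). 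Finally, since $a_{0}=0$ the weight $\dfrac{1}{1+|a_{0}|}+\dfrac{r}{1-r}$ reduces to $\dfrac{1}{1-r}$.

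For the first statement I would insert these bounds termwise into $\mathcal{A}^{t}_{1,f}(r)$: replace $|f(z)|$ by $r(a+r)/(1+ar)$, bound the linear tail by $(1-a^{2})\,r^{N}/(1-r)$, keep the $\mathrm{sgn}(t)$--term as it stands, and bound the quadratic tail $\sum_{n\ge t+1}|a_{n}|^{2}r^{2n}$ by $(1-a^{2})\,r^{2t+2}$ and hence by $(1-a^{2})\,r^{N}/(1-r)$, using $2t+2\ge N$ (immediate from $t=\lfloor(N-1)/2\rfloor$). This collapses the claim to a single inequality $\Phi(a,r)\le 1$ for all $a\in[0,1]$, where $\Phi$ is an explicit rational function of $a$ with parameters $r,N$. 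The structural point is that $\Phi(\cdot,r)$ is concave on $[0,1]$ --- the second derivative of $r(a+r)/(1+ar)$ in $a$ is negative and the remaining terms are affine or concave in $a^{2}$ --- so its maximum over $[0,1]$ is attained either at an endpoint or at the unique stationary point; one checks $\Phi(1,r)\le 1$ directly, and at the stationary point the inequality, after clearing denominators, is equivalent to $2(1+r)\,r^{N}\le(1-r)^{2}$, i.e. to $r\le R_{N}$. I expect the bulk of the work to be here: arranging the one--variable estimate so the extremal configuration is pinned down exactly by the defining polynomial of $R_{N}$, and separately treating the degenerate range $N\in\{1,2\}$, where $t=0$ and the $\mathrm{sgn}(t)$--term is absent.

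The second statement follows the same scheme, the only change being that $|f(z)|^{2}$ replaces $|f(z)|$ and is estimated by $\big(r(a+r)/(1+ar)\big)^{2}$ (or by $r^{2}$). Because squaring shrinks this contribution, the resulting one--variable inequality relaxes, and the same concavity / stationary--point analysis now matches the sharper threshold $(1+r)\,r^{N}\le(1-r)^{2}$, i.e. $r\le R^{\prime}_{N}$; all remaining bookkeeping, including the $N\in\{1,2\}$ case, is identical.

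For sharpness I would test the family $f_{a}(z)=z\,\varphi_{a}(z)\in\mathcal{B}$, $0<a<1$, built from a Blaschke factor $\varphi_{a}$ (the natural choice being $\varphi_{a}(z)=(a-z)/(1-az)$ or a variant adapted to the index $N$), chosen so that every geometric series appearing in $\mathcal{A}^{t}_{p,f_{a}}(r)$ sums in closed form and $|f_{a}|$ is evaluated at the boundary point where it is largest. Expanding $\mathcal{A}^{t}_{p,f_{a}}(r)$ to first order in $1-a$ as $a\to 1^{-}$, the coefficient of $(1-a)$ has the same sign as $2(1+r)\,r^{N}-(1-r)^{2}$ when $p=1$ and as $(1+r)\,r^{N}-(1-r)^{2}$ when $p=2$; hence for every $r$ exceeding $R_{N}$ (respectively $R^{\prime}_{N}$) that coefficient is positive and $\mathcal{A}^{t}_{p,f_{a}}(r)>1$ once $a$ is close enough to $1$. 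Consequently the radii cannot be enlarged, and both $R_{N}$ and $R^{\prime}_{N}$ are best possible.
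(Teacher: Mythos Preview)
Your reading of the hypothesis is the source of the trouble. The clause ``$f(z)=\sum_{n=1}^{\infty}a_nz^n$'' in the printed statement is a typographical slip for $\sum_{n=0}^{\infty}$: the functional $\mathcal{A}^t_{p,f}$ is defined with the weight $1/(1+|a_0|)$, the radii $R_N,R'_N$ are exactly the Bohr--Rogosinski radii for arbitrary $|a_0|<1$, and the paper's own generalisations (Theorem~\ref{th-2.4} and Lemma~\ref{BS-lem-1.1}) are stated for $a=|f(0)|<1$ with extremal $f_a(z)=(a-z)/(1-az)$ and $a\to 1^{-}$. Your factorisation $f=zg$ and pivot variable $a=|a_1|$ therefore address only the degenerate slice $a_0=0$, and your sharpness test $f_a(z)=z\varphi_a(z)$ cannot recover $R_N$: on that slice the true optimal radius is strictly larger, so the first--order expansion in $1-a$ you describe will not produce the polynomial $2(1+r)r^N-(1-r)^2$.

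Independently of the misread hypothesis, the handling of the $\mathrm{sgn}(t)$ block is incomplete. You propose to ``keep the $\mathrm{sgn}(t)$--term as it stands'' and then run a concavity argument on a function $\Phi(a,r)$, but $\mathrm{sgn}(t)\sum_{n=1}^{t}|a_n|^2\,r^N/(1-r)$ still carries the free parameters $|a_2|,\dots,|a_t|$; what you call $\Phi$ is not a function of $(a,r)$ alone, so neither the concavity claim nor the stationary--point computation is well posed. The paper (following \cite{Liu-Liu-Ponnusamy-2021}) avoids this entirely by invoking Lemma~\ref{lem-3.5}, which absorbs the linear tail, the $\mathrm{sgn}(t)$ block and the weighted quadratic sum into the single clean estimate $(1-|a_0|^2)\,r^N/(1-r)$. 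Combining that with the Schwarz--Pick bound $|f(z)|\le (r+|a_0|)/(1+|a_0|r)$ reduces the problem to the one--variable function $\Phi_{p,m_0,k,N}(a)$ of \eqref{eee-3.4} (with $m_0=k=1$), whose monotonicity in $a=|a_0|$ on $[0,1]$ is exactly what pins down $2(1+r)r^N-(1-r)^2\le 0$ for $p=1$ and $(1+r)r^N-(1-r)^2\le 0$ for $p=2$. Lemma~\ref{lem-3.5} is the structural input your outline is missing.
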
  
\begin{thm}\label{Thm-1.2}\cite{Ponnusamy-Vijayak-Wirths-RM-2020}
	If $f(z)=\sum_{n=1}^{\infty}a_nz^n\in \mathcal{B},$
	then 
	\begin{align*}
		\sum_{n=1}^{\infty}|a_n|r^n+\left( \dfrac{1}{1+|a_1|}+\dfrac{r}{1-r}\right)\sum_{n=2}^{\infty}|a_n|^2r^{2n-1}\leq 1 \;\;\mbox{for}\;\;  r\leq\dfrac{3}{5}.
	\end{align*}
	The number $3/5$ is best possible.
\end{thm}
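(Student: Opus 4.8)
The plan is to pass to the function $g(z):=f(z)/z$. Since $f(0)=0$, Schwarz's lemma gives $|g(z)|\le1$ on $\mathbb{D}$, so $g\in\mathcal{B}$ with $g(0)=a_1$ and Taylor coefficients $b_m=a_{m+1}$ for $m\ge0$. Writing $a:=|a_1|$, the standard Schwarz--Pick coefficient bounds for $g$ give $|a_n|=|b_{n-1}|\le1-a^2$ for every $n\ge2$, together with $\sum_{n=2}^{\infty}|a_n|^2=\sum_{m=1}^{\infty}|b_m|^2\le1-|b_0|^2=1-a^2$. (Only moduli of coefficients occur in the asserted inequality, so nothing is lost by a rotation.)

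The essential ingredient is the sharp refined Bohr inequality for $\mathcal{B}$: for $h(z)=\sum_{m\ge0}c_mz^m\in\mathcal{B}$ and $0\le r<1$,
\[
\sum_{m=1}^{\infty}|c_m|r^m+\left(\frac{1}{1+|c_0|}+\frac{r}{1-r}\right)\sum_{m=1}^{\infty}|c_m|^2r^{2m}\le(1-|c_0|^2)\frac{r}{1-r},
\]
which is the estimate underlying \cite{Ponnusamy-Vijayak-Wirths-RM-2020}: it follows from the Schur parametrization $h=(c_0+z\psi)/(1+\overline{c_0}\,z\psi)$ with $\psi\in\mathcal{B}$ and a summation of the resulting power series, and it becomes an equality for the disk automorphisms $h(z)=(c_0+e^{i\theta}z)/(1+\overline{c_0}e^{i\theta}z)$. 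Applying this to $h=g$ (so $c_0=a_1$, $c_m=a_{m+1}$) and multiplying through by $r$ turns it into
\[
\sum_{n=2}^{\infty}|a_n|r^n+\left(\frac{1}{1+a}+\frac{r}{1-r}\right)\sum_{n=2}^{\infty}|a_n|^2r^{2n-1}\le(1-a^2)\frac{r^2}{1-r}.
\]
Adding the $n=1$ term $|a_1|r=ar$ to both sides bounds the left-hand side of the theorem by $\Phi(a,r):=ar+(1-a^2)\dfrac{r^2}{1-r}$, so it suffices to prove $\Phi(a,r)\le1$ for all $a\in[0,1]$ whenever $r\le3/5$.

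For fixed $r\in(0,1)$ the map $a\mapsto\Phi(a,r)$ is concave with vertex at $a^\ast=(1-r)/(2r)$. If $r\le1/3$ then $a^\ast\ge1$, so $\Phi$ is increasing on $[0,1]$ and $\Phi(a,r)\le\Phi(1,r)=r\le1$. If $1/3\le r\le3/5$ then $a^\ast\in[0,1]$ and a short computation gives $\Phi(a^\ast,r)=\dfrac{1-r}{4}+\dfrac{r^2}{1-r}=\dfrac{5r^2-2r+1}{4(1-r)}$; since this last quantity is $\le1$ exactly when $5r^2+2r-3\le0$, that is, when $r\le3/5$, we conclude $\Phi(a,r)\le\Phi(a^\ast,r)\le1$, which proves the inequality. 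For sharpness, at $r=3/5$ one has $a^\ast=1/3$; for $f(z)=z\,\dfrac{z+1/3}{1+z/3}$ the function $g=f/z$ is precisely the automorphism $(1/3+z)/(1+z/3)$, so every inequality above is an equality and the left-hand side equals $\Phi(1/3,3/5)=1$. Since $r\mapsto\Phi(1/3,r)$ is strictly increasing, it exceeds $1$ for every $r>3/5$, so the constant $3/5$ is best possible.

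The coefficient estimates, the one-variable optimization, and the sharpness verification are all routine; the genuine obstacle — and the only step requiring serious function theory — is the sharp refined Bohr inequality for $\mathcal{B}$ displayed above. It is worth stressing that the first-layer bounds $|c_m|\le1-|c_0|^2$ and $\sum_m|c_m|^2\le1-|c_0|^2$ by themselves are \emph{not} enough to reach the sharp constant $3/5$ (they only yield a strictly smaller radius); one must exploit the full Schur parametrization, equivalently the entire chain of coefficient constraints, to push the estimate up to $r=3/5$.
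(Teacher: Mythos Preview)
Your proof is correct and follows essentially the same route as the paper's proof of Lemma~\ref{Lem-1.5} (which specializes to Theorem~\ref{Thm-1.2} at $k=1$): pass to $g=f/z$, apply the refined Bohr inequality (the paper's Lemma~\ref{lem-3.5} with $N=1$, your ``sharp refined Bohr inequality for $\mathcal{B}$''), multiply through by $r$, and then maximize the resulting bound $ar+(1-a^{2})\,r^{2}/(1-r)$ over $a\in[0,1]$ to obtain the threshold $3/5$. Your sharpness argument via the explicit automorphism with $a=1/3$ is exactly the extremal family $f^{*}_{a}$ the paper employs.
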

There are some recent research findings on the Bohr phenomenon for the class $ \mathcal{B} $ involving Schwarz functions (see \cite{Allu-Arora-JMAA-2022,Huang-Liu-Ponnu-AMP-2020,Hu-Wang-Long-BMMSS-2021,Gang Liu-JMAA-2021}) but till date, there are no results on Bohr-Rogosinski inequalities in multidimensional settings involving Schwarz functions. In this paper, we take the opportunity to explore sharp Bohr-Rogosinski inequalities. Henceforth, throughout the discussion, 
for $ k\in\mathbb{N} $, we suppose that 
\begin{align*}
	\mathcal{B}_{k}&=\bigg\{\widehat{\omega}\in \mathcal{B} : \widehat{\omega}(0)=\widehat{\omega}^{\prime}(0)=\cdots=\widehat{\omega}^{(k-1)}(0)=0\;\mbox{and}\; \widehat{\omega}^{(k)}\neq 0\bigg\}.
\end{align*} 
The members of the class $\mathcal{B}_{k}$ are called the Schwarz functions. To obtain our desired results, we first prove several lemmas in details and proof of the main results will follow the argument of the lemmas. \vspace{1.2mm}

Before we proceed further, we need to introduce some concepts in $ \mathbb{C}^n $. Here and hereafter, we use the standard multi-index notation  $\alpha$ which is an $n$-tuple $(\alpha_1,\alpha_2,\dots \alpha_n)$ of non-negative integers, $|\alpha|$ is the sum $\alpha_1+\cdots+\alpha_n$ of its components, $\alpha!$ is the product $\alpha_1!\alpha_2!\dots \alpha_n!,$  $z$ denotes an $n$-tuple $(z_1,z_2,\dots z_n)$ of complex numbers, and $z^n$ denotes the product $z^{\alpha_1}_1z^{\alpha_2}_2\dots z^{\alpha_n}_n. $
Let $\mathbb{D}^n:=\{z\in\mathbb{C}^n:z=(z_1,z_2, \dots ,z_n), |z_j|<1,j=1,2,\dots,n\}$ be the open unit polydisk. Also, let ${\mathcal{K}_n}$ be the largest non negative number such that if the $n$- variables power series $\sum_{\alpha}a_{\alpha}z^{\alpha}$ 
converges in $\mathbb{D}^n$ and its sum $f$ has modulus less than $1$ so that $a_{\alpha}={\partial}^{\alpha}f(0)/\alpha!,$ then $ \sum_{\alpha}|a_\alpha||z^\alpha|<1\;\;\mbox{for all}\;\; z\in {\mathcal{K}_n}.\mathbb{D}^n. $
\vspace{1.2mm}

If $Q\subset{\mathbb{C}^n}$ be a complete circular domain  centered at $0\in Q ,$  then every holomorphic function $f$ in $Q$ can be expanded into sum of homogeneous polynomials given by 
\begin{align}\label{ee-1.3}
	f(z)=\sum_{n=0}^{\infty}P_n(z)\;\; \mbox{for}\;\; z\in Q,
\end{align}
where $ P_n(z)=\sum_{|\alpha|=n}A_{\alpha}z^{\alpha}$ is a homogeneous polynomial of degree $n,$ and $P_0(z)=f(0).$ In 1$ 997 $, Boas and Khavinson \cite{Boas-1997} introduced the $ N $-dimensional Bohr radius $ K_N, $ $ (N>1$ ) for the polydisk $ \mathbb{D}^N=\mathbb{D}\times\cdots\times\mathbb{D} $ which generates extensive research activity in Bohr phenomenon (see, e.g. \cite{Aizenberg-Djakov-PAMS-2000,Paulsen-PLMS-2002,Hamada-IJM-2009,Galicer-Mansilla-Muro-TAMS-2020,S. Kumar-PAMS-2022,Lin-Liu-Ponnusamy-Acta-2023}). In fact, Boas and Khavinson \cite{Boas-1997} proved that the $ N $-Bohr radius $ K_N $ as the largest radius $ r>0 $ such that for every complex polynomials $ \sum_{\alpha\in\mathbb{N}_0^N}c_{\alpha}z^{\alpha} $ in $ N $ variables 
\begin{align*}
	\sup_{z\in r\mathbb{D}^N} \sum_{\alpha\in\mathbb{N}_0^N}|c_{\alpha}z^{\alpha}|\leq \sup_{z\in \mathbb{D}^N} \bigg|\sum_{\alpha\in\mathbb{N}_0^N}c_{\alpha}z^{\alpha}\bigg|.
\end{align*}
As expected, the constant $ K_N $ is defined as the largest radius $ r $ satisfying $ \sum_{\alpha}|c_{\alpha}z^{\alpha}|<1 $ for all $ z $ with $ ||z||_{\infty}:=\max\{|z_1|, |z_2|, \ldots, |z_N|\}<r $ and all $ f(z)=\sum_{\alpha}c_{\alpha}z^{\alpha}\in\mathcal{H}(\mathbb{D}^N) $. For different aspects of multidimensional Bohr phenomenon including recent advances, we refer to the articles by  Aizenberg \cite{Aizn-PAMS-2000}, Liu and Ponnusamy \cite{Liu-Ponnusamy-PAMS-2021}, Paulsen \cite{Paulsen-PLMS-2002}, Defant and Frerick \cite{Defant-Frerick-IJM-2001}, Kumar\cite{S. Kumar-PAMS-2022} and (also see \cite{Liu-Liu-JMAA-2020,Lin-Liu-Ponnusamy-Acta-2023} and references therein).\vspace{1.2mm}

We recall here some of the results discussed above.
\begin{thm}\label{Thm-1.3}\cite{Aizn-PAMS-2000,Liu-Ponnusamy-PAMS-2021}
	If the series \eqref{ee-1.3} converges in the domain $Q$ and the estimate $|f(z)<1 |$ holds in $Q,$ then
	\begin{enumerate}
		\item[(a)] $\sum_{n=0}^{\infty}|P_n(z)|<1$ in the homothetic domain $(1/3)Q.$ Moreover, if $Q$ is convex, then $1/3$ is the best possible constant.\vspace{2mm}
		\item[(b)] for $p\in (0,2],$ we have 
		\begin{align*}
			|f(z)|^p+\sum_{n=N}^{\infty}|P_n(z)|\leq 1
		\end{align*} 
		in the homothetic domain $(R_{N,p})Q,$ where $R_{N,p}$ is positive root of the equation $2(1+r)r^N-p(1-r)^2=0.$ If $Q$ is convex, then the radius $R_{N,p}$ is best possible. \vspace{2mm}
		\item[(c)] for $p>0$, 
		\begin{align*}
			|f(z)|^p+\sum_{n=1}^{\infty}|P_n(z)|+\left(\frac{1}{1+a}+\frac{r}{1-r}\right)\sum_{n=1}^{\infty}|P_n(z)|^2\leq 1
		\end{align*} 
		for $z$ in the homothetic domain $(r_{a,p}).Q$ and $r\leq r_{a,p}, $ where $r_{a,p}$ is the minimum  positive root in $(0,1) $ of the equation $[1-(2-a^2)r](1+ar)^p-(1-r)(r+a)^p=0$ and $a=|f(0)|$ . Moreover, if $Q$ is convex, the number $r_{a,p}$ cannot be improved.\vspace{2mm}
		\item[(d)] if $Q$ is a complete circular domain center at $0\in Q\subset \mathbb{C}^n$ and $f(0)=0.$ Then $\sum_{n=1}^{\infty}|P_n(z)|\leq 1$ in the homothetic domain $\frac{1}{\sqrt{2}}Q.$ Moreover, if $Q$ is convex, then the number $1/\sqrt{2}$ cannot be improved.
	\end{enumerate}	
\end{thm}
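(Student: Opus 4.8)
The plan is to deduce all four parts from their one--variable analogues via a slicing argument that exploits the complete circular structure of $Q$. Fix $w\in Q$; since $\lambda w\in Q$ for every $|\lambda|\le 1$, the slice $g_w(\lambda):=f(\lambda w)$ is holomorphic on an open neighbourhood of $\overline{\mathbb{D}}$ with $|g_w|<1$ there, so $g_w\in\mathcal{B}$, and homogeneity of $P_n$ gives the Taylor expansion $g_w(\lambda)=\sum_{n=0}^\infty P_n(w)\lambda^n$. Hence, writing $z=rw$ with $w\in Q$ and $r\in(0,1)$, every sum appearing in (a)--(d) becomes the corresponding one--variable sum for $g_w$ at radius $r$: $\sum_n|P_n(z)|=\sum_n|P_n(w)|r^n$, $\sum_{n\ge N}|P_n(z)|=\sum_{n\ge N}|P_n(w)|r^n$, $\sum_{n\ge1}|P_n(z)|^2=\sum_{n\ge1}|P_n(w)|^2r^{2n}$, $|f(z)|^p=|g_w(r)|^p$, while $|f(0)|=|g_w(0)|=a$ is the same for every slice. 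It therefore suffices to prove, uniformly over $g\in\mathcal{B}$ with $|g(0)|=a$, the one--variable inequalities at radius $r$, and then to let $r$ run over the homothety parameter.

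For part (a) this one--variable statement is exactly the classical Bohr theorem, and the strict bound $|f|<1$ upgrades it to a strict inequality. For parts (b) and (c) I would first majorize the one--variable left--hand side using the Schwarz--Pick bound $|g(r)|\le(a+r)/(1+ar)$ together with the classical coefficient estimates $|a_n|\le 1-a^2$ ($n\ge1$) and $\sum_{n\ge1}|a_n|^2\le 1-a^2$; for (b) this yields
\begin{align*}
|g(r)|^p+\sum_{n\ge N}|a_n|r^n\;\le\;\left(\frac{a+r}{1+ar}\right)^p+(1-a^2)\,\frac{r^N}{1-r}\;=:\;\Psi_p(a,r),
\end{align*}
and for (c) a similar but sharper majorant built on the refined one--variable inequality (cf.\ Theorem~\ref{Thm-1.2}). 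The task is then to show $\max_{a\in[0,1]}\Psi_p(a,r)=1$ for $r$ up to the stated threshold; since $\Psi_p(1,r)=1$, one computes that $a\mapsto\Psi_p(a,r)$ attains its maximum at $a=1$ precisely while $p(1-r)^2-2(1+r)r^N\ge0$, i.e.\ for $r\le R_{N,p}$, and in (c) the analogous computation makes $r_{a,p}$ emerge as the first $r$ at which the optimized bound reaches $1$, which on clearing denominators is exactly $[1-(2-a^2)r](1+ar)^p-(1-r)(r+a)^p=0$.

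Part (d) is the easiest: here $a=0$, so $g_w(0)=0$ and Parseval gives $\sum_{n\ge1}|P_n(w)|^2\le 1$, whence by Cauchy--Schwarz $\sum_{n\ge1}|P_n(z)|=\sum_{n\ge1}|P_n(w)|r^n\le r/\sqrt{1-r^2}$, which is $\le1$ exactly for $r\le1/\sqrt2$. For the sharpness assertions (when $Q$ is convex) the plan is to transport the one--variable extremals. Given a homothety parameter $\rho$ slightly above the claimed radius, choose $\xi\in\partial Q$ and, using convexity, a supporting hyperplane at $\xi$; since $Q$ is also balanced, after normalization this produces a complex--linear functional $L$ with $|L|<1$ on $Q$ and $L(\xi)=1$. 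Taking $f=\phi\circ L$ with $\phi\in\mathcal{B}$ the relevant one--variable extremal (a Blaschke factor $\phi_a$ for (a),(b), the appropriate composition for (c), and $\lambda\mapsto\lambda\,\phi_a(\lambda)$ with $a=1/\sqrt2$ for (d)), homogeneity of $L$ gives $P_n(z)=c_nL(z)^n$ with $c_n$ the Taylor coefficients of $\phi$; evaluating at $z=s\xi$ with the claimed radius $<s<\rho$ then reduces the left--hand side to $\sum_n|c_n|s^n$, which exceeds $1$, and $z\in\rho Q$, so the radius cannot be improved.

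I expect the real work to be in two places. First, the one--variable optimizations in (b) and, especially, (c): verifying that after optimizing over $a=|f(0)|\in[0,1]$ the explicit majorant stays $\le1$ exactly up to the root of the stated algebraic equation, for the whole ranges $p\in(0,2]$ and $p>0$, is a genuine (if elementary) calculus argument. Second, the sharpness part: one must make the supporting--functional construction precise --- in particular that $Q$ convex and balanced really yields $L$ with $|L|<1$ on $Q$ but $\sup_Q|L|=1$ --- and verify that the transported extremals attain values $>1$ for every homothety parameter beyond the claimed radius, which is where the convexity hypothesis is essential.
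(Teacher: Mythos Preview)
The paper does not itself prove Theorem~\ref{Thm-1.3} --- it is quoted from \cite{Aizn-PAMS-2000,Liu-Ponnusamy-PAMS-2021} --- but the proofs it supplies for its own generalizations (Theorems~\ref{th-2.1}--\ref{th-2.5}) follow exactly your strategy: slice $Q$ by complex lines through the origin to reduce to a one-variable function in $\mathcal{B}$, invoke the corresponding one-variable lemma (Lemmas~\ref{Lem-1.1}--\ref{Lem-1.5}), and for sharpness write a convex circular $Q$ as $\bigcap_b\{|b_1z_1+\cdots+b_nz_n|<1\}$ and compose the one-variable extremal with the linear functional $g(z)=b_1z_1+\cdots+b_nz_n$. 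Your proposal is correct and matches the paper's approach essentially line for line.
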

\section{Main Results}
We obtain the following result as the multidimensional analogue of the Bohr inequality involving Schwarz functions.
\begin{thm} \label{th-2.1} If the series \eqref{ee-1.3} converges in the domain $Q,$  $a=|f(0)|<1,$ and the estimate $|f(z)|<1$ holds in $Q,$ then
	\begin{align}\label{ee-2.1}
		\sum_{n=0}^{\infty}|P_n(\widehat{\omega}_k(z))|\leq 1
	\end{align}
	in the homothetic domain $(1/\sqrt[k]{3})Q.$ Moreover, if $Q$ is convex, then the number $1/\sqrt[k]{3}$ is the best possible.
\end{thm}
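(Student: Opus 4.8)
The plan is to deduce Theorem~\ref{th-2.1} from the classical multidimensional Bohr inequality, Theorem~\ref{Thm-1.3}(a) (which is recovered by taking $k=1$ and $\widehat{\omega}_1=\mathrm{id}$), together with a Schwarz lemma for holomorphic self-maps of $Q$ vanishing to order $k$ at the origin. Here $\widehat{\omega}_k$ is understood as the $n$-dimensional counterpart of a member of $\mathcal{B}_k$, namely a holomorphic map $\widehat{\omega}_k\colon Q\to Q$ whose homogeneous expansion $\widehat{\omega}_k=\sum_{m\geq k}\Omega_m$ begins at degree $k$, so that $P_n(\widehat{\omega}_k(z))$ is defined for $z\in Q$ and $\sum_n P_n(\widehat{\omega}_k(z))=f(\widehat{\omega}_k(z))$. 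Write $\mu_Q$ for the Minkowski gauge of the balanced domain $Q$, so that $rQ=\{z:\mu_Q(z)<r\}$ and $\mu_Q(\lambda z)=|\lambda|\,\mu_Q(z)$ for $\lambda\in\mathbb{C}$.

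The key step is the estimate $\mu_Q(\widehat{\omega}_k(w))\leq\mu_Q(w)^{k}$ for all $w\in Q$. To prove it, fix $w\in Q\setminus\{0\}$, pick $s$ with $\mu_Q(w)<s<1$ (so $w/s\in Q$), and consider the slice $\Psi(\zeta):=\widehat{\omega}_k(\zeta w/s)$, a holomorphic map $\mathbb{D}\to Q$ with a zero of order $\geq k$ at $0$; write $\Psi(\zeta)=\zeta^{k}\widetilde{\Psi}(\zeta)$ with $\widetilde{\Psi}\colon\mathbb{D}\to\mathbb{C}^{n}$ holomorphic. Since $\Psi(\zeta)\in Q$ and $Q$ is balanced, $|\zeta|^{k}\mu_Q(\widetilde{\Psi}(\zeta))=\mu_Q(\Psi(\zeta))<1$, i.e.\ $\mu_Q(\widetilde{\Psi}(\zeta))<|\zeta|^{-k}$ on $\mathbb{D}\setminus\{0\}$; letting $|\zeta|\to1$ one gets $\mu_Q(\widetilde{\Psi}(\zeta))\leq1$ on $\mathbb{D}$, hence $\mu_Q(\Psi(\zeta))\leq|\zeta|^{k}$, and taking $\zeta=s$ followed by $s\downarrow\mu_Q(w)$ gives the claim. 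The passage to the limit is most transparent when $Q$ is convex: then $\mu_Q$ is the supremum of $|\ell(\cdot)|/\|\ell\|_Q$ over the supporting linear functionals $\ell$ of $Q$, and for each such $\ell$ the function $\ell(\Psi(\zeta))/\|\ell\|_Q$ is a one-variable Schwarz function vanishing to order $\geq k$, so $|\ell(\Psi(\zeta))|\leq\|\ell\|_Q|\zeta|^{k}$ by the ordinary Schwarz lemma; for pseudoconvex $Q$ one applies the maximum principle to the subharmonic function $\zeta\mapsto\log\mu_Q(\widetilde{\Psi}(\zeta))$, and for an arbitrary complete circular $Q$ one first passes to the envelope of holomorphy, to which $f$ extends with the same supremum bound.

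Granting this, \eqref{ee-2.1} is immediate: if $z$ lies in the open homothetic domain $(1/\sqrt[k]{3})\,Q$ then $\mu_Q(z)<1/\sqrt[k]{3}$, so $\mu_Q(\widehat{\omega}_k(z))\leq\mu_Q(z)^{k}<1/3$, i.e.\ $\widehat{\omega}_k(z)\in(1/3)\,Q$, and Theorem~\ref{Thm-1.3}(a) applied at the point $\widehat{\omega}_k(z)$ gives $\sum_{n\geq0}|P_n(\widehat{\omega}_k(z))|<1$. For sharpness when $Q$ is convex, fix $r>1/\sqrt[k]{3}$ and recycle the extremal configuration for Theorem~\ref{Thm-1.3}(a): choose $w_0\in\partial Q$ and a supporting functional $\ell$ with $\|\ell\|_Q=1$, $\ell(w_0)=1$, and for $a\in(0,1)$ put $f(z)=\dfrac{a-\ell(z)}{1-a\,\ell(z)}$ and $\widehat{\omega}_k(z)=\ell(z)^{k-1}z$. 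Then $f$ converges in $Q$ with $|f|<1$ and $|f(0)|=a<1$; its homogeneous parts are $P_m(z)=b_m\,\ell(z)^{m}$, where $\sum_m|b_m|\rho^{m}=M_{\phi_a}(\rho)$ with $\phi_a(\zeta)=(a-\zeta)/(1-a\zeta)$, while $\widehat{\omega}_k$ is a degree-$k$ homogeneous polynomial map of $Q$ into $Q$ with $\ell(\widehat{\omega}_k(z))=\ell(z)^{k}$. Choosing $z=r(1-\delta)w_0\in rQ$ gives $\sum_m|P_m(\widehat{\omega}_k(z))|=M_{\phi_a}\big((r(1-\delta))^{k}\big)$, and since $M_{\phi_a}(\rho)>1$ precisely for $\rho>1/(1+2a)$, one first takes $\delta$ small enough that $(r(1-\delta))^{k}>1/3$ and then $a$ close enough to $1$ that $(r(1-\delta))^{k}>1/(1+2a)$; this pushes the left-hand side above $1$, so $1/\sqrt[k]{3}$ cannot be enlarged.

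The main obstacle is the Schwarz-type estimate of the second paragraph — transplanting the one-variable Schwarz lemma to $Q$ at the level of the gauge $\mu_Q$, which is clean for convex (and pseudoconvex) $Q$ but needs the detour through the envelope of holomorphy in general. Everything else is bookkeeping with homothetic domains and the classical extremal functions $\phi_a$.
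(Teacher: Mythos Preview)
Your argument is correct but follows a genuinely different route from the paper. The paper first isolates a one-variable lemma (Lemma~\ref{Lem-1.1}: $\sum_{n\ge0}|a_n||\widehat{\omega}_k(z)|^n\le1$ for $r\le1/\sqrt[k]{3}$) and then lifts it to $Q$ by slicing with complex lines $z=bh$, so that the multidimensional series becomes a one-variable series in $h$ to which Lemma~\ref{Lem-1.1} applies directly; throughout, $\widehat{\omega}_k$ remains the one-variable Schwarz function of the class $\mathcal{B}_k$. You instead reinterpret $\widehat{\omega}_k$ as a holomorphic self-map of $Q$ vanishing to order $k$, prove a gauge Schwarz lemma $\mu_Q(\widehat{\omega}_k(w))\le\mu_Q(w)^k$, and then invoke the already-known multidimensional Bohr inequality (Theorem~\ref{Thm-1.3}(a)) at the image point. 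Your approach is more geometric, needs no auxiliary one-variable lemma, and---under your broader reading of $\widehat{\omega}_k$---proves a strictly more general statement; it also explains transparently why the radius is the $k$-th root of the classical one. The paper's slicing argument buys simplicity: no analysis of $\mu_Q$, no maximum-principle issues, since every slice is already a disk. Your extremal pair $f=\phi_a\circ\ell$, $\widehat{\omega}_k(z)=\ell(z)^{k-1}z$ is the coordinate-free version of the paper's $(f_b\circ g)(z^k)$. One soft spot: the step ``letting $|\zeta|\to1$'' in your Schwarz estimate tacitly needs $\log\mu_Q\circ\widetilde{\Psi}$ to obey the maximum principle, which is clean for convex (or pseudoconvex) $Q$ as you note, but the envelope-of-holomorphy detour you sketch for arbitrary complete circular $Q$ is not fully justified---though the paper does not claim anything in that generality either.
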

\begin{rem}  It is worth noticing that
	\begin{enumerate}
		\item[(i)]in particular, when $P_n(z)=a_nz^n$ and $\widehat{\omega}_{1}(z)=z$ for $z\in\mathbb{D} \subset\mathbb{C}$, then we see that Theorem \ref{th-2.1} and the classical Bohr's theorem \cite{Bohr-1914} are identical.
		\item[(ii)] In particular, when $\widehat{\omega}_{k}(z)=z$ for $z\in Q \subset\mathbb{C}^n$, then we see that Theorem \ref{th-2.1} coincides with Theorem \ref{Thm-1.3}(a).
	\end{enumerate}
\end{rem}
Our next result is the following which is a generalization of Theorem \ref{Thm-1.3}(d). 
\begin{thm} \label{th-2.2}
	Suppose that $Q$ is a complete circular domain centered at $0\in Q\subset\mathbb{C}^n.$ If the series \eqref{ee-1.3} converges in the domain $Q,$  $a=|f(0)|<1$ and the estimate $|f(z)|<1$ holds in $Q,$ then
	\begin{align}\label{ee-2.2}
		\sum_{n=1}^{\infty}|P_n(\widehat{\omega}_k(z))|\leq 1
	\end{align}
	in the homothetic domain $(1/\sqrt[2k]{2})Q.$ Moreover, if $Q$ is convex, then the number $1/\sqrt[2k]{2}$ is best possible.
\end{thm}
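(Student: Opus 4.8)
The plan is to deduce \eqref{ee-2.2} from the classical one–variable fact that any $g(\lambda)=\sum_{n\ge 0}c_n\lambda^n$ with $|g|<1$ on $\mathbb D$ satisfies $\sum_{n\ge 1}|c_n|^2\le 1-|c_0|^2$, combined with a Schwarz–Pick type bound which contracts the Schwarz function $\widehat\omega_k$ by the power $k$. Throughout write $\rho_Q$ for the Minkowski functional of $Q$, so that $Q=\{z\in\mathbb C^n:\rho_Q(z)<1\}$ and $\rho_Q(cz)=|c|\,\rho_Q(z)$. \textbf{Step 1 (a Schwarz lemma for $\widehat\omega_k$).} First I would prove that, since $\widehat\omega_k$ maps $Q$ into $Q$ and has a zero of order $k$ at the origin,
\[
\rho_Q\big(\widehat\omega_k(z)\big)\le\big(\rho_Q(z)\big)^k\qquad(z\in Q).
\]
This I would get by restriction to the complex line through $0$ and $z$: writing $z=\rho_Q(z)\,w$ with $\rho_Q(w)=1$, the disc $\lambda\mapsto\widehat\omega_k(\lambda w)$ lies in $Q$ and vanishes to order $k$ at $0$, so $\lambda\mapsto\widehat\omega_k(\lambda w)/\lambda^k$ is holomorphic on $\mathbb D$; applying the maximum principle to the subharmonic function $\lambda\mapsto\rho_Q\big(\widehat\omega_k(\lambda w)/\lambda^k\big)$ (using that $\log\rho_Q$ is plurisubharmonic on the complete circular domain $Q$) shows this last map takes values in $\overline Q$, and evaluation at $\lambda=\rho_Q(z)$ gives the inequality. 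In particular, if $z$ lies in the homothetic domain $(1/\sqrt[2k]{2})\,Q$, say $z=c\,w_0$ with $w_0\in Q$ and $c=1/\sqrt[2k]{2}$, then $\rho_Q\big(\widehat\omega_k(z)\big)\le\big(c\,\rho_Q(w_0)\big)^k<c^k=1/\sqrt2$; hence either $\widehat\omega_k(z)=0$, in which case $\sum_{n\ge 1}|P_n(\widehat\omega_k(z))|=0$ since $P_n(0)=0$ for $n\ge 1$ and there is nothing to prove, or $v:=\widehat\omega_k(z)/c^k$ has $\rho_Q(v)<1$, i.e.\ $v\in Q$.

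\textbf{Step 2 (homogeneity, line restriction, Cauchy–Schwarz).} By homogeneity of the $P_n$ we have $P_n(\widehat\omega_k(z))=c^{nk}P_n(v)$, so
\[
\sum_{n\ge 1}\big|P_n(\widehat\omega_k(z))\big|=\sum_{n\ge 1}c^{nk}\,|P_n(v)|\le\Big(\sum_{n\ge 1}|P_n(v)|^2\Big)^{1/2}\Big(\sum_{n\ge 1}c^{2nk}\Big)^{1/2}.
\]
For the first factor I would restrict $f$ to the complex line $\mathbb{C}v$: since $Q$ is complete circular, $\lambda v\in Q$ for $|\lambda|<1$, so $g(\lambda):=f(\lambda v)=\sum_{n\ge 0}P_n(v)\,\lambda^n$ has $|g|<1$ on $\mathbb D$, and Parseval on $|\lambda|=r$ followed by $r\uparrow 1$ yields $\sum_{n\ge 0}|P_n(v)|^2\le 1$, hence $\sum_{n\ge 1}|P_n(v)|^2\le 1-|P_0(v)|^2=1-a^2\le 1$. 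Since $c^{2k}=1/2$, the second factor equals $\big(\sum_{n\ge 1}2^{-n}\big)^{1/2}=1$, and therefore $\sum_{n\ge 1}|P_n(\widehat\omega_k(z))|\le 1$, which is \eqref{ee-2.2}.

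\textbf{Step 3 (sharpness when $Q$ is convex).} To see that $1/\sqrt[2k]{2}$ cannot be enlarged, I would lift the one–variable extremal function. Fix $v_0$ with $\rho_Q(v_0)=1$; convexity and balancedness of $Q$ give, by Hahn–Banach, a $\mathbb C$–linear functional $L$ with $L(v_0)=1$ and $|L|<1$ on $Q$. Put
\[
f(w)=L(w)\,\frac{1/\sqrt2-L(w)}{\,1-L(w)/\sqrt2\,},\qquad \widehat\omega_k(w)=L(w)^{\,k-1}\,w .
\]
Then $f$ is holomorphic on $Q$ with $|f|<1$ (being $L$ times a Blaschke factor of $L$) and $f(0)=0$, while $\widehat\omega_k$ maps $Q$ into $Q$ and vanishes to order exactly $k$ at $0$; moreover $\widehat\omega_k(\rho v_0)=\rho^{k}v_0$, and $\mu\mapsto f(\mu v_0)=\mu\,\frac{1/\sqrt2-\mu}{1-\mu/\sqrt2}$ has $n$–th Taylor coefficient $P_n(v_0)$. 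Hence for $z=\rho v_0$ one obtains $\sum_{n\ge 1}|P_n(\widehat\omega_k(z))|=\sum_{n\ge 1}|P_n(v_0)|\,\rho^{nk}$, and the routine one–variable computation shows this equals $1$ at $\rho^k=1/\sqrt2$ and exceeds $1$ once $\rho^k>1/\sqrt2$, i.e.\ $\rho=\rho_Q(z)>1/\sqrt[2k]{2}$. This proves the constant is best possible.

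\textbf{The main obstacle.} Steps 2 and 3 are essentially bookkeeping; the genuine content is Step 1, the Schwarz–Pick inequality $\rho_Q(\widehat\omega_k(z))\le\rho_Q(z)^k$ for a self-map of the balanced domain $Q$ with a $k$–fold zero at the origin. The delicate point is the regularity of the Minkowski functional: one needs $\log\rho_Q$ plurisubharmonic (equivalently $Q$ pseudoconvex, which holds because $Q$ is the domain of convergence of \eqref{ee-1.3}) so that the one–variable Schwarz lemma can be transported along complex lines. For $n=1$ and $Q=\mathbb D$ this reduces to the iterated Schwarz lemma $|\widehat\omega_k(z)|\le|z|^k$, so the novelty is entirely in the passage to several variables together with the nonlinearity of $\widehat\omega_k$.
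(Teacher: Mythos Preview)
Your argument is clean, but it hinges on an interpretation of $\widehat\omega_k$ that differs from the paper's, and this creates a real gap in Step~1.

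\textbf{The interpretation issue.} In the paper, $\widehat\omega_k$ is an element of $\mathcal{B}_k$, i.e.\ a one-variable Schwarz function $\mathbb{D}\to\overline{\mathbb{D}}$ with a $k$-fold zero at $0$. The paper's proof of Theorem~\ref{th-2.1} (which Theorem~\ref{th-2.2} mimics) writes $z=bh$ on a complex line and applies the one-dimensional Lemma~\ref{Lem-1.12} to the scalar variable $h$; the expression $|P_n(\widehat\omega_k(z))|$ is then shorthand for $|P_n(b)|\,|\widehat\omega_k(h)|^n$. You instead read $\widehat\omega_k$ as a holomorphic self-map of $Q$ with a $k$-fold zero, which is a strictly more general object.

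\textbf{The gap.} Under your reading, Step~1 needs the maximum principle applied to $\lambda\mapsto\rho_Q\big(\widehat\omega_k(\lambda w)/\lambda^k\big)$, and for this you need $\log\rho_Q$ plurisubharmonic, equivalently $Q$ pseudoconvex. Your justification---that $Q$ is ``the domain of convergence of \eqref{ee-1.3}''---does not follow from the hypotheses: we are only told the series converges \emph{in} $Q$, not that $Q$ is the maximal such domain. A complete circular domain need not be pseudoconvex, so without an extra hypothesis your Schwarz-type inequality $\rho_Q(\widehat\omega_k(z))\le\rho_Q(z)^k$ is unproven. (For the sharpness half, $Q$ is convex hence pseudoconvex, so there is no issue there.)

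\textbf{Comparison.} The paper's route is to slice by complex lines and invoke Lemma~\ref{Lem-1.12} directly; because the Schwarz function acts in one variable, the inequality $|\widehat\omega_k(h)|\le|h|^k$ is the ordinary Schwarz lemma, and no pseudoconvexity enters. Your Steps~2--3 are correct and in fact give a self-contained reproof of the Bombieri bound via Parseval and Cauchy--Schwarz rather than quoting it, and your sharpness construction with $\widehat\omega_k(w)=L(w)^{k-1}w$ is a nice genuinely multidimensional example. If you either (i) adopt the paper's one-variable $\widehat\omega_k$, so that Step~1 collapses to the classical Schwarz lemma, or (ii) add the hypothesis that $Q$ is pseudoconvex, your proof goes through and yields a somewhat stronger statement than the paper's.
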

\begin{rem} The following observations are clear.
	\begin{enumerate}
		\item[(i)] In particular, if $P_n(z)=a_nz^n$ and $\widehat{\omega}_{k}(z)=z$ for $z\in\mathbb{D} \subset\mathbb{C}$, then Theorem \ref{th-2.2} becomes identical with the result of Bombieri in \cite{Bombieri-1962}.
		\item[(ii)] In fact, if $\widehat{\omega}_{k}(z)=z$ for $z\in Q \subset\mathbb{C}^n$, then Theorem \ref{th-2.2} coincides with Theorem \ref{Thm-1.3}(d).
	\end{enumerate}
\end{rem}
To continue the study of Bohr-Rogosinski inequalities for functions of several complex variables, we obtain the next result as a generalization of Theorem \ref{Thm-1.3}(b) involving Schwarz functions.
\begin{thm} \label{th-2.3}
	If the series \eqref{ee-1.3} converges in the domain $Q,$ $p\in(0,2],$ $a=|f(0)|<1,$ and the estimate $|f(z)|<1$ holds in $Q,$ then
	\begin{align}\label{ee-2.3}
		|f(\widehat\omega_{m_0}(z))|^p+\sum_{n=N}^{\infty}|P_n(\widehat{\omega}_k(z))|\leq 1
	\end{align}
	in the homothetic domain $(R_{p,m_0}^{k,N})Q,$ 	where $ R^{k,N}_{p,m_0} $ is the minimum root in $ (0, 1) $ of equation $ Y(r)=0 $, where
	\begin{align}\label{ee-1.4}
		Y(r):=2r^{kN}\left(1+r^{m_0}\right)-p\left(1-r^{m_0}\right)\left(1-r^k\right).
	\end{align} Moreover, if $Q$ is convex, then  $R_{p,m_0}^{k,N}$ is best possible.
\end{thm}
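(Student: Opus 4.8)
The plan is to reduce the multidimensional statement to a one-variable estimate via the slice technique used throughout this circle of ideas, and then to run the usual Bohr-Rogosinski argument on the slice. First I would fix $z \in Q$ with $z \neq 0$ and consider the holomorphic function $g_z(\lambda) := f(\lambda z / \|z\|_Q)$ of the single complex variable $\lambda \in \mathbb{D}$, where $\|\cdot\|_Q$ denotes the Minkowski functional of $Q$; since $Q$ is a complete circular domain containing $0$, $g_z$ is a well-defined bounded analytic function on $\mathbb{D}$ with $|g_z(\lambda)| < 1$ and $g_z(0) = f(0)$, so $|g_z(0)| = a$. Expanding $g_z(\lambda) = \sum_{n=0}^{\infty} P_n(z)\, (\lambda/\|z\|_Q)^n$ shows that the $n$-th Taylor coefficient of $g_z$ has modulus $|P_n(z)|/\|z\|_Q^n$, and the homogeneity $P_n(\widehat{\omega}_k(z)) = (\text{scalar})^n P_n(\cdot)$ together with the estimate $|\widehat{\omega}_k(z)| \le |z_{\max}|^k$ for $\widehat{\omega}_k \in \mathcal{B}_k$ (Schwarz–Pick type bound) will let me convert $\sum_{n \ge N} |P_n(\widehat{\omega}_k(z))|$ into a tail sum in the coefficients of $g_z$ evaluated at radius $r^k$, where $r = \|z\|_Q$. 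Similarly $|f(\widehat{\omega}_{m_0}(z))|^p$ must be bounded using a Schwarz–Pick estimate: one has $|f(\widehat{\omega}_{m_0}(z))| \le (|g_z(0)| \cdot \text{something} + r^{m_0})/(1 + \cdots)$, i.e. the standard $|g(w)| \le (|g(0)| + |w|)/(1 + |g(0)||w|)$ with $|w| \le r^{m_0}$.

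Next I would assemble these pieces into a single function of $r \in (0,1)$ and $a \in [0,1)$. Using the coefficient bounds $|a_n| \le 1 - |a_0|^2$ (for $n \ge 1$) coming from the Schwarz lemma applied to $(g_z - g_z(0))/(1 - \overline{g_z(0)} g_z)$, the tail $\sum_{n \ge N} |P_n(\widehat{\omega}_k(z))|$ is dominated by $(1 - a^2) r^{kN}/(1 - r^k)$. Combining with the Schwarz–Pick bound on $|f(\widehat{\omega}_{m_0}(z))|^p$, the left side of \eqref{ee-2.3} is at most
\begin{align*}
  \left(\frac{a + r^{m_0}}{1 + a r^{m_0}}\right)^{\! p} + (1 - a^2)\frac{r^{kN}}{1 - r^k}.
\end{align*}
It then remains to show this expression is $\le 1$ precisely when $r \le R_{p,m_0}^{k,N}$. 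Writing $\Phi_a(r)$ for the displayed quantity, I would check that $\Phi_a(r) \le 1$ is equivalent, after clearing denominators and using $1 - a^2 = (1-a)(1+a)$, to an inequality that is monotone in $a$; the extremal case is $a \to 0$ (this is where the structure of $Y(r)$ in \eqref{ee-1.4} comes from — setting $a = 0$ gives $\Phi_0(r) = r^{p m_0} + r^{kN}/(1 - r^k) \le 1$, which rearranges to $Y(r) \le 0$ after multiplying by $(1 - r^k)$ and using the elementary bound $r^{p m_0} \le \frac{2 r^{m_0}}{1 + r^{m_0}} \cdot \frac{1}{\text{(adjustment)}}$, or more precisely using $1 - r^{p m_0} \ge p(1 - r^{m_0})/(1 + r^{m_0})$ for $p \in (0,2]$, a convexity fact). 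The monotonicity in $a$ is the step that needs care: I expect to differentiate $\Phi_a(r)$ in $a$ and show the derivative has a fixed sign on the relevant range, or alternatively to use the substitution reducing the $p$-th power term to its worst case.

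For sharpness, when $Q$ is convex I would take the standard extremal function: on a suitable one-dimensional slice through a boundary direction, choose $f$ to be (a coordinate composed with) the Möbius map $\varphi_a(\lambda) = (a - \lambda)/(1 - a\lambda)$, and take the Schwarz functions $\widehat{\omega}_k(z)$, $\widehat{\omega}_{m_0}(z)$ to be the appropriate pure powers $\lambda^k$, $\lambda^{m_0}$ of the slice parameter; letting $a \to 0$ and $\lambda \to R_{p,m_0}^{k,N}$ along the slice shows the sum approaches $1$, so the radius cannot be enlarged — here convexity of $Q$ is used exactly as in Theorem \ref{Thm-1.3}(b) to guarantee the extremal slice genuinely exits $(R_{p,m_0}^{k,N})Q$ at the right point.

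The main obstacle I anticipate is twofold: first, justifying the Schwarz–Pick estimate for $|f(\widehat{\omega}_{m_0}(z))|$ in the polydisk/complete-circular-domain setting and correctly tracking how the composition with $\widehat{\omega}_{m_0} \in \mathcal{B}_{m_0}$ (rather than with a single variable Schwarz function) interacts with the homogeneous expansion — the vanishing of the first $m_0 - 1$ derivatives of $\widehat{\omega}_{m_0}$ is what produces the exponent $r^{m_0}$ and must be invoked cleanly; and second, the monotonicity-in-$a$ reduction, i.e. proving that among all admissible $a \in [0,1)$ the bound is tightest at $a = 0$, which is the analytic heart of identifying $Y(r)$ as the correct defining equation. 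Once those are in place the rest is the routine rearrangement connecting $\Phi_0(r) \le 1$ with $Y(r) \le 0$ and the observation that $Y$ changes sign exactly at its minimal root in $(0,1)$.
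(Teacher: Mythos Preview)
Your slice reduction and the resulting upper bound
\[
\left(\frac{a+r^{m_0}}{1+ar^{m_0}}\right)^{p}+(1-a^{2})\,\frac{r^{kN}}{1-r^{k}}
\]
are correct and match the paper exactly (this is the content of its Lemma \ref{BS-lem-1.2}). The genuine gap is in the next step: you have the monotonicity in $a$ backwards. The displayed expression is \emph{increasing} in $a$ on $[0,1]$ when $r\le R^{k,N}_{p,m_0}$, not decreasing, so the supremum over $a$ is attained as $a\to 1^{-}$, where the expression equals $1$ identically. The defining equation $Y(r)=0$ does not come from the case $a=0$ at all; setting $a=0$ gives $r^{pm_0}+r^{kN}/(1-r^{k})=1$, which is a different equation with a different (larger) root. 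Rather, $Y(r)=0$ is exactly the condition $\partial_a\big|_{a=1}\!\big[(\text{expression})\big]=0$, i.e.\ the borderline for the monotonicity to hold up to $a=1$. Your proposed ``convexity fact'' $1-r^{pm_0}\ge p(1-r^{m_0})/(1+r^{m_0})$ is in any case false for $p\in(1,2]$ near $r=0$, so that route cannot work.

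The paper establishes the correct monotonicity by differentiating in $a$ and splitting into two regimes: for $0<p\le1$ one checks $\partial_a^2\ge0$, so $\partial_a$ is increasing and it suffices that $\partial_a|_{a=1}\le0$, which is $Y(r)\le0$; for $1<p\le2$ a direct comparison using an auxiliary function $\Psi$ yields $\partial_a\le a^{p-1}\,\partial_a|_{a=1}\le0$. Correspondingly, the sharpness argument uses the M\"obius map $f_a$ with $a\to 1^{-}$ (not $a\to 0$): one computes the exact Bohr--Rogosinski sum for $f_a$ with $\widehat\omega_{m_0}(z)=-z^{m_0}$, $\widehat\omega_k(z)=z^{k}$, and shows that its limit as $a\to1^{-}$ exceeds $1$ precisely when $Y(r)>0$. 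Once you flip the direction of the extremal, the rest of your outline (slicing, coefficient bounds, and the convexity-of-$Q$ reduction for sharpness) is fine and agrees with the paper.
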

\begin{rem} The following observations are clear.
	\begin{enumerate}
		\item In case of when $P_n(z)=a_nz^n$, $\widehat{\omega}_{k}(z)=z$ and $\widehat{\omega}_{m_0}(z)=z^m$ for $z\in\mathbb{D} \subset\mathbb{C}$, one can verify that Theorem \ref{th-2.3} and \cite[Theorems 2.1 and 2.2]{Kayumov-Khammatova-Ponnusamy-JMAA-2021} are identical for $p=1$ and $p=2$ respectively.
		\item  Suppose that $\widehat{\omega}_{m_0}(z) =\widehat{\omega}_{k}(z)=z$ for $z\in Q \subset\mathbb{C}^n.$ Then we see that Theorems \ref{th-2.3} and \ref{Thm-1.3}(b) are identical.
	\end{enumerate}
\end{rem}

We obtain the following result as a multidimensional analogue of Theorem \ref{Thm-1.1}  involving Schwarz functions. 
\begin{thm} \label{th-2.4}
	If the series \eqref{ee-1.3} converges in the domain $Q,$ $p\in (0,2],$ $a=|f(0)|<1,$ and the estimate $|f(z)|<1$ holds in $Q,$ then
	\begin{align}\label{ee-2.5}
		&\nonumber|f(\widehat{\omega}_{m_0}(z))|^p+\sum_{n=N}^{\infty}|P_n(\widehat{\omega}_k(z))|+sgn(t)\sum_{|\alpha|=1}^{t}|A_{\alpha}|^2\frac{|\widehat{\omega}_k(z)|^N}{1-|\widehat{\omega}_k(z)|}\\&\quad+\left(\frac{1}{1+|f(0)|}+\frac{|\widehat{\omega}_k(z)|}{1-|\widehat{\omega}_k(z)|}\right)\sum_{n=t+1}^{\infty}|P_n(\widehat{\omega}_k(z))|^2\leq 1
	\end{align}
	in the homothetic domain $(R^{k,N}_{p,m_0})Q,$ 	where $t=\left\lfloor (N-1)/2\right\rfloor$ for  $N\in \mathbb{N}$ and $ R^{k,N}_{p,m_0} $ is the minimum root in $ (0, 1) $ of equation \eqref{ee-1.4}.
	Moreover, if $Q$ is convex, then $R^{k,N}_{p,m_0}$ is best possible.
\end{thm}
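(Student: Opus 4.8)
The plan is to reduce to one complex variable by restricting $f$ to a line through the origin, then to remove the Schwarz functions $\widehat{\omega}_k,\widehat{\omega}_{m_0}$ by the Schwarz lemma, and finally to prove a one--variable estimate in the spirit of Theorem~\ref{Thm-1.1}; the several--variable bookkeeping mirrors the scheme behind Theorem~\ref{th-2.3}.

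\textbf{Step 1 (reduction to one variable).} First I would fix $z$ in the homothetic domain $(R^{k,N}_{p,m_0})Q$, let $r_0<R^{k,N}_{p,m_0}$ be its Minkowski functional relative to $Q$, and set $u=z/r_0$, so that $\zeta u\in Q$ for all $|\zeta|<1$. Then $g(\zeta):=f(\zeta u)=\sum_{n=0}^{\infty}P_n(u)\zeta^n$ lies in $\mathcal{B}$, $g(0)=f(0)$ so $|g(0)|=a$, and its Taylor coefficients are $a_n=P_n(u)$. By homogeneity $P_n(\lambda w)=\lambda^nP_n(w)$, and because the Schwarz functions enter only through the radial variable (so $\widehat{\omega}_k(r_0u)=\widehat{\omega}_k(r_0)u$ and $\widehat{\omega}_{m_0}(r_0u)=\widehat{\omega}_{m_0}(r_0)u$, whence $|\widehat{\omega}_k(z)|=|\widehat{\omega}_k(r_0)|=:\rho_0$ and $f(\widehat{\omega}_{m_0}(z))=g(\widehat{\omega}_{m_0}(r_0))$), the left--hand side of \eqref{ee-2.5} at $z$ becomes the corresponding one--variable Bohr--Rogosinski expression for $g$ at radius $\rho_0$, apart from the low--order term $\mathrm{sgn}(t)\sum_{|\alpha|=1}^{t}|A_\alpha|^2\,\tfrac{|\widehat{\omega}_k(z)|^N}{1-|\widehat{\omega}_k(z)|}$, which I would bound directly, via the several--variable Parseval inequality $\sum_{\alpha\neq 0}|A_\alpha|^2\le 1-a^2$, by $(1-a^2)\,\tfrac{\rho_0^{N}}{1-\rho_0}$.

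\textbf{Step 2 (removing the Schwarz functions).} Since $\widehat{\omega}_k\in\mathcal{B}_k$, the map $\widehat{\omega}_k(\zeta)/\zeta^k$ sends $\mathbb{D}$ into $\overline{\mathbb{D}}$, so $\rho_0\le r_0^{k}$; similarly $|\widehat{\omega}_{m_0}(r_0)|\le r_0^{m_0}$. Every summand of the reduced expression is nondecreasing in $\rho_0$, and $|g(\widehat{\omega}_{m_0}(r_0))|$ is controlled by the Schwarz--Pick lemma, $|g(w)|\le(a+|w|)/(1+a|w|)$, which increases in $|w|$. Hence it suffices to prove the reduced inequality with $\rho_0$ replaced by $r_0^{k}$ and $|\widehat{\omega}_{m_0}(r_0)|$ by $r_0^{m_0}$; write $r:=r_0$.

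\textbf{Step 3 (the one--variable estimate).} It then remains to show, for every $g(\zeta)=\sum_{n\ge0}a_n\zeta^n\in\mathcal{B}$ with $|a_0|=a<1$ and every $r\le R^{k,N}_{p,m_0}$,
\begin{align*}
&\Big(\frac{a+r^{m_0}}{1+ar^{m_0}}\Big)^{p}+\sum_{n=N}^{\infty}|a_n|r^{kn}+\mathrm{sgn}(t)\Big(\sum_{n=1}^{t}\ \sum_{|\alpha|=n}|A_\alpha|^2\Big)\frac{r^{kN}}{1-r^{k}}\\
&\qquad+\Big(\frac{1}{1+a}+\frac{r^{k}}{1-r^{k}}\Big)\sum_{n=t+1}^{\infty}|a_n|^2r^{2kn}\le 1.
\end{align*}
I would feed in the estimates $|a_n|\le 1-a^2$ $(n\ge1)$, the Parseval bound $\sum_{n\ge1}|a_n|^2\le 1-a^2$, Cauchy--Schwarz on the tail $\sum_{n\ge N}|a_n|r^{kn}$, and the arithmetic fact $2(t+1)=2\lceil N/2\rceil\ge N$ --- which is exactly why $t=\lfloor(N-1)/2\rfloor$ is chosen --- so that $r^{2k(t+1)}\le r^{kN}$ and both quadratic sums are dominated by multiples of $r^{kN}/(1-r^{k})$. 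The overlap between the tail indices $(n\ge N)$ and the last--sum indices $(n\ge t+1$, which contain them since $N\ge t+1)$ must be handled by splitting the $\ell^2$--mass; the quadratic part then becomes a linear functional of $\big(\sum_{1\le n\le t}|a_n|^2,\ \sum_{n\ge t+1}|a_n|^2\big)$ over the simplex determined by $\sum_{n\ge1}|a_n|^2\le 1-a^2$, and its maximum is attained at a vertex. Assembling the pieces bounds the left--hand side by a one--real--variable function $\Phi(r,a)$; a monotonicity check identifies $a\to1^-$ as the extremal regime, and there $\Phi(r,a)\le 1$ is equivalent to $2r^{kN}(1+r^{m_0})\le p(1-r^{m_0})(1-r^{k})$, i.e.\ to $Y(r)\le0$ in the notation of \eqref{ee-1.4}, which holds for $r$ up to the smallest root $R^{k,N}_{p,m_0}$ of $Y$. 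I expect this combining step to be the main obstacle: the individual estimates have to telescope down to $Y(r)=0$ with no residual slack, uniformly in $a\in[0,1)$, which is where the choice of $t$ and the precise accounting of the $\ell^2$--mass are essential.

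\textbf{Step 4 (sharpness).} Assuming $Q$ convex, I would fix $z^0\in\partial Q$ and, by Hahn--Banach, pick a linear functional $\ell$ on $\mathbb{C}^n$ with $|\ell(z)|<1$ on $Q$ and $\ell(z^0)=1$. For $a\in(0,1)$ take $f(z)=\varphi_a(\ell(z))$ with $\varphi_a(w)=(a+w)/(1+aw)$, so that $P_n(z)=c_n(\ell(z))^n$ with $c_0=a$ and $|c_n|=a^{\,n-1}(1-a^2)$, and take $\widehat{\omega}_k(z),\widehat{\omega}_{m_0}(z)$ to equal $(\ell(z))^k,(\ell(z))^{m_0}$ along the ray $z=tz^0$, $0<t<1$. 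Then $f(0)=a$, and evaluating \eqref{ee-2.5} at $z=tz^0$ collapses it to the one--variable extremal expression in $t$; letting $a\to1^-$ and expanding to first order in $1-a$ shows that for every $t>R^{k,N}_{p,m_0}$ the left--hand side exceeds $1$, so the homothety constant cannot be enlarged. (For $\widehat{\omega}_k=\widehat{\omega}_{m_0}=\mathrm{id}$ and $P_n(z)=a_nz^n$ this recovers the one--variable sharpness in Theorem~\ref{Thm-1.1}.)
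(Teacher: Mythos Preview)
Your overall architecture --- slice to a complex line, remove the Schwarz functions via the Schwarz and Schwarz--Pick lemmas, prove a one--variable estimate, then check sharpness with a M\"obius map composed with a supporting linear functional --- is exactly the paper's route (it cites Lemma~\ref{BS-lem-1.1} and the slicing of Theorem~\ref{th-2.1}). Steps~1, 2 and~4 are fine in outline.

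The genuine gap is Step~3. The toolkit you propose ($|a_n|\le 1-a^2$, the Parseval bound $\sum_{n\ge1}|a_n|^2\le 1-a^2$, and Cauchy--Schwarz) is too coarse to hit the sharp radius. If you bound the linear tail by $(1-a^2)\dfrac{r^{kN}}{1-r^k}$ and then, \emph{separately}, optimise the quadratic block over the simplex $x+y\le 1-a^2$ as you describe, the maximum of the quadratic block is again $(1-a^2)\dfrac{r^{kN}}{1-r^k}$ (attained at the vertex $x=1-a^2$, $y=0$). Your $\Phi(r,a)$ then becomes $\bigl(\tfrac{a+r^{m_0}}{1+ar^{m_0}}\bigr)^p+2(1-a^2)\dfrac{r^{kN}}{1-r^k}$, and the monotonicity check at $a\to1^-$ yields $4r^{kN}(1+r^{m_0})\le p(1-r^{m_0})(1-r^k)$ rather than the correct factor $2$; you end up with a strictly smaller radius than $R^{k,N}_{p,m_0}$. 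There is no way to ``telescope down'' with these inputs alone, because the linear and quadratic bounds you use are independently sharp for different extremals, so their sum cannot be sharp.

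What the paper does instead is invoke the sharp one--variable inequality (Lemma~\ref{lem-3.5}, from \cite{Liu-Liu-Ponnusamy-2021})
\[
\sum_{n=N}^{\infty}|a_n|r^n+\mathrm{sgn}(t)\sum_{n=1}^{t}|a_n|^2\frac{r^N}{1-r}+\Bigl(\frac{1}{1+|a_0|}+\frac{r}{1-r}\Bigr)\sum_{n=t+1}^{\infty}|a_n|^2r^{2n}\le (1-|a_0|^2)\frac{r^N}{1-r},
\]
which bounds the linear tail and \emph{all} the quadratic correction terms \emph{together} by the single quantity $(1-|a_0|^2)r^N/(1-r)$. Its proof uses the Schur--iterate structure of $\mathcal{B}$, not just Parseval, and this is precisely the ``no residual slack'' you were worried about. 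Once Lemma~\ref{lem-3.5} is applied with $r$ replaced by $r^k$, the rest is exactly the monotonicity analysis of $\Phi_{p,m_0,k,N}(a)$ already carried out in Lemma~\ref{BS-lem-1.2}, giving the same radius $R^{k,N}_{p,m_0}$ as in Theorem~\ref{th-2.3}. A smaller point: after slicing, the low--order quadratic term should involve $\sum_{n=1}^{t}|P_n(u)|^2$ (the coefficients of the sliced function $g$), not the multi--index sums $\sum_{|\alpha|=n}|A_\alpha|^2$; your Parseval bound on the latter is not needed, and conflating the two obscures how the slice matches Lemma~\ref{BS-lem-1.1}.
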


\begin{table}[ht]
	\centering
	\begin{tabular}{|l|l|l|l|l|l|l|l|l|l|}
		\hline
		$\;\;k$& $\;2$&$\;2$& $\;3$& $ \;3 $&$\;3$& $\;7$& $ \;4 $& $\;5 $ \\
		\hline
		$m_0$& $\;2 $&$\;4 $& $\;3 $& $\;4$& $\;5 $& $\;3$& $\;5$ &$\; 7 $\\
		\hline
		$N$& $\;2$&$\;1 $& $\;2$& $\;1$&$\;2 $& $\;1$& $\;7$ &$\; 10 $\\
		\hline
		$p$& $\;0.12$&$\;0.6 $  &$ \;0.1 $& $\;1.2 $& $\;1.6 $&$\;2 $& $\;0.19$& $\;1.7$\\
		\hline
		$R^{k,N}_{p,m_0}$& $0.428676$&$0.463452 $& $0.54271$& $0.661436 $&$0.781955 $& $0.811851$& $0.861239$ &$ 0.940732 $\\
		\hline
	\end{tabular}\vspace{2.5mm}
	\caption{The approximate values of the root $ R^{k,N}_{p,m_0}$ for different values of $ k,m,N\in\mathbb{N} $ and $p\in(0,2]$ are shown in the table.}
\end{table}
\begin{figure}[!htb]
	\begin{center}
		\includegraphics[width=0.45\linewidth]{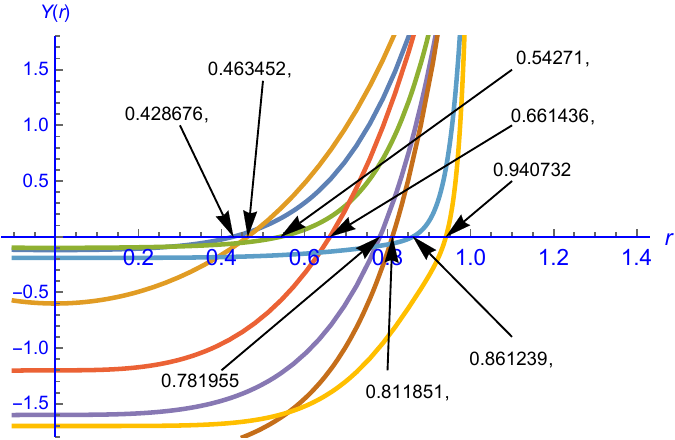}
	\end{center}
	\caption{The graph exhibits the roots $ R^{k,N}_{p,m_0} $ of Eq. \eqref{ee-1.4} for different values of $p, k, N$ and $m$ in the interval $ (0, 1) $.}
\end{figure}
\begin{rem} We note that
	\begin{enumerate}
		\item[(i)] if, in particular, $P_n(z)=a_nz^n$ and $\widehat{\omega}_{k}(z)= \widehat{\omega}_{m_0}(z)=z$ for $z\in\mathbb{D} \subset\mathbb{C},$ then Theorems \ref{th-2.4} and \ref{Thm-1.1} for $p=1,2$ are identical.
		\item[(ii)]  if, in particular, $\widehat{\omega}_{m_0}(z) =\widehat{\omega}_{k}(z)=z$ for $z\in Q \subset\mathbb{C}^n$ and $N=1$, then Theorem \ref{th-2.4} is multidimensional analogue of \cite[Lemma 3.3]{Liu-Ponnusamy-PAMS-2021}.
	\end{enumerate}
\end{rem}
Further, we establish a sharp refined version of Theorem \ref{th-2.2} in multidimensional analogue involving Schwarz functions.
\begin{thm} \label{th-2.5}
	If the series \eqref{ee-1.3} converges in the domain $Q,$  and the estimate $|f(z)|<1$ holds in $Q$ with $f(0)=0,$ then	
	\begin{align}\label{ee-2.6}
		&\sum_{n=1}^{\infty}|P_n(\widehat{\omega}_k(z))|+\left(\frac{|\widehat{\omega}_{k}(z)|^{-1}}{1+|A_{\alpha}|}+\frac{1}{1-|\widehat{\omega}_k(z)|}\right)\sum_{n=2}^{\infty}|P_n(\widehat{\omega}_k(z))|^2\leq 1
	\end{align}
	in the homothetic domain $(\sqrt[k]{3/5})Q,$ 	where  $|A_{\alpha}|=|A_{\alpha_1}|+\dots+|A_{\alpha_n}|$ such that $\alpha_1+\dots+\alpha_n=1$. Moreover, if $Q$ is convex, then $\sqrt[k]{3/5}$ is best possible.
\end{thm}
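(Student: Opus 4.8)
The plan is to derive Theorem~\ref{th-2.5} from the one--variable refined Bohr--Rogosinski inequality of Theorem~\ref{Thm-1.2} by slicing $f$ along the complex lines through the origin of $Q$, and to pass from the real variable of that theorem to $|\widehat{\omega}_k(z)|$ through the Schwarz lemma applied to $\widehat{\omega}_k(\lambda)/\lambda^{k}$.

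First I would reduce to one complex variable. Since $Q$ is a complete circular domain centred at $0$, a nonzero point $z$ of the homothetic domain $(\sqrt[k]{3/5})Q$ may be written $z=r'w$ with $w\in\partial Q$ and $0\le r'<\sqrt[k]{3/5}$ the gauge of $z$ relative to $Q$. For this fixed $w$ the slice function
\[
F(\lambda):=f(\lambda w)=\sum_{n=1}^{\infty}P_n(w)\,\lambda^{n}
\]
is holomorphic on $\mathbb{D}$ (because $\lambda w\in Q$ whenever $|\lambda|<1$), satisfies $|F(\lambda)|<1$ there, and has $F(0)=P_0(w)=f(0)=0$; thus $F\in\mathcal{B}$ with vanishing constant term, its $n$-th Taylor coefficient being $P_n(w)$. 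Moreover, since $\widehat{\omega}_k\in\mathcal{B}_k$ vanishes to order $k$ at the origin, $\widehat{\omega}_k(\lambda)/\lambda^{k}$ extends holomorphically to $\mathbb{D}$ with modulus at most $1$ there by the maximum principle, so $|\widehat{\omega}_k(\lambda)|\le|\lambda|^{k}$; in particular the number $s:=|\widehat{\omega}_k(z)|=|\widehat{\omega}_k(r')|$ satisfies $s\le (r')^{k}<3/5$.

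By homogeneity of the $P_n$ and the slicing conventions one has $|P_n(\widehat{\omega}_k(z))|=s^{\,n}|P_n(w)|$, so the left-hand side of \eqref{ee-2.6} equals
\[
\sum_{n=1}^{\infty}|P_n(w)|\,s^{n}+\Bigl(\frac{1}{1+|A_\alpha|}+\frac{s}{1-s}\Bigr)\sum_{n=2}^{\infty}|P_n(w)|^{2}s^{2n-1},
\]
the factor $|\widehat{\omega}_k(z)|^{-1}$ being absorbed harmlessly into the $s^{2n}$, $n\ge2$, of the quadratic tail. Now apply Theorem~\ref{Thm-1.2} to $F\in\mathcal{B}$ at the admissible radius $s<3/5$: it bounds this expression by $1$, but with $|P_1(w)|$ in place of $|A_\alpha|$. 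To finish I would note that $P_1(w)=\sum_{|\alpha|=1}A_\alpha w^{\alpha}$, whence $|P_1(w)|\le|A_\alpha|$, and that $x\mapsto 1/(1+x)$ is decreasing; replacing $|P_1(w)|$ by the larger quantity $|A_\alpha|$ therefore only decreases the left-hand side, and \eqref{ee-2.6} follows throughout $(\sqrt[k]{3/5})Q$.

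For sharpness under the convexity hypothesis I would combine the extremal of Theorem~\ref{Thm-1.2} with a supporting functional and the extremal Schwarz function $\widehat{\omega}_k(\lambda)=\lambda^{k}$: choose $w^{*}\in\partial Q$ and a continuous linear functional $\ell$ with $\sup_{Q}|\ell|=\ell(w^{*})=1$, let $\phi(\zeta)=\zeta\,\dfrac{\zeta+1/3}{1+\zeta/3}=\sum_{n\ge1}a_n\zeta^{n}$ be the function giving equality in Theorem~\ref{Thm-1.2} at $r=3/5$ (for which $|a_1|=1/3$), and set $f(z)=\phi(\ell(z))$, which is holomorphic on $Q$ with $|f|<1$, $f(0)=0$ and $P_n(z)=a_n\ell(z)^{n}$. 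Along $z=r'w^{*}$ one has $|\widehat{\omega}_k(z)|=(r')^{k}$, which crosses $3/5$ as $r'\downarrow\sqrt[k]{3/5}$, and by the equality case of Theorem~\ref{Thm-1.2} the left-hand side of \eqref{ee-2.6} then exceeds $1$; hence $\sqrt[k]{3/5}$ cannot be enlarged. The step I expect to be the main obstacle is not an estimate but a matching issue: reconciling the several avatars of the radial variable --- the gauge $r'$, its $k$-th power, and $|\widehat{\omega}_k(z)|$ --- so that the substitution of $|\widehat{\omega}_k(z)|$ for $r$ in Theorem~\ref{Thm-1.2} is legitimate and the weight $\tfrac{1}{1+|A_\alpha|}+\tfrac{|\widehat{\omega}_k(z)|}{1-|\widehat{\omega}_k(z)|}$ lines up with $\tfrac{1}{1+|a_1|}+\tfrac{r}{1-r}$; and, in the last step, making sure that $|A_\alpha|$ is actually attained (equal to $|P_1(w^{*})|$) by the chosen extremal, which is the only place where the convexity of $Q$ genuinely enters.
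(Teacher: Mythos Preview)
Your argument is correct and follows essentially the same route as the paper: slice $f$ along complex lines through the origin and apply the one-variable refined inequality together with the Schwarz bound $|\widehat{\omega}_k(\lambda)|\le|\lambda|^k$; the paper merely packages these two steps into Lemma~\ref{Lem-1.5} first and then invokes the slicing argument of Theorem~\ref{th-2.1}, while you merge them by applying Theorem~\ref{Thm-1.2} directly at the radius $s=|\widehat{\omega}_k(z)|$. Your explicit treatment of the discrepancy between $|P_1(w)|$ (which the one-variable lemma actually produces) and $|A_\alpha|$ (which the statement carries), via the monotonicity of $x\mapsto 1/(1+x)$, is correct and in fact clarifies a point the paper's proof leaves implicit; for the sharpness step your concern is well placed, and it is resolved by choosing the supporting functional $\ell$ to depend on a single coordinate (equivalently, working in a half-space $R_b$ with only one nonzero $b_j$), so that $\sum_j|b_j|=|\ell(w^*)|$ and hence $|A_\alpha|=|P_1(w^*)|$.
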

\begin{rem}
	It is easy to see that, in particular, if $ P_n(z)=a_nz^n $ and $\widehat{\omega}_{k}(z)=z,$ for $z\in \mathbb{D}\subset\mathbb{C},$ then Theorem \ref{Thm-1.2} becomes a particular form of Theorem \ref{th-2.5}.
\end{rem}
\section{Key lemmas and their proofs}
In this section, we state some key lemmas and their proofs which will be needed to prove the main results of this paper. In fact, these will play a key role in shortening the proof of the main results.
\begin{lem} \label{Lem-1.1}
	Suppose that $ f(z)=\sum_{n=0}^{\infty}a_nz^n\in\mathcal{B} $, and  $ \widehat{\omega}_{k}\in\mathcal{B}_{k}  $, $k\in\mathbb{N}$.  We have the sharp inequality:
	\begin{align*}
		\mathcal{M}^{\widehat{\omega}}_f(r):&=\sum_{n=0}^{\infty}|a_n||\widehat{\omega}_k(z)|^n\leq 1\; \mbox{for}\; |z|=r\leq \dfrac{1}{\sqrt[k]{3}}.
	\end{align*} 
	The number $1/\sqrt[k]{3}$ cannot be improved.
\end{lem}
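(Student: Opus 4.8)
The plan is to reduce the asserted inequality to the classical sharp Bohr inequality for the class $\mathcal{B}$ via a Schwarz-lemma estimate on $\widehat{\omega}_k$, and then to certify sharpness using the monomial Schwarz function $z^k$ together with the standard family of disk automorphisms.

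First I would record the key majorization. Since $\widehat{\omega}_k\in\mathcal{B}_k$ vanishes to order $k$ at the origin, the quotient $\widehat{\omega}_k(z)/z^k$ extends analytically to $\mathbb{D}$, and comparing moduli on the circles $|z|=\rho$ and letting $\rho\to1^-$ (maximum principle, using $\|\widehat{\omega}_k\|_\infty\le1$) gives $|\widehat{\omega}_k(z)/z^k|\le1$, i.e.\ $|\widehat{\omega}_k(z)|\le|z|^k=r^k$ whenever $|z|=r$. Because each $|a_n|\ge0$, the majorant $t\mapsto\sum_{n=0}^{\infty}|a_n|t^n$ is nondecreasing on $[0,1)$, so
\begin{align*}
	\mathcal{M}^{\widehat{\omega}}_f(r)=\sum_{n=0}^{\infty}|a_n|\,|\widehat{\omega}_k(z)|^n\le\sum_{n=0}^{\infty}|a_n|\,(r^k)^n=M_f(r^k).
\end{align*}
Then I would invoke the classical Bohr inequality for $\mathcal{B}$, namely $M_f(s)\le1$ for $s\le1/3$, with $s=r^k$; this applies exactly when $r^k\le1/3$, that is, $r\le1/\sqrt[k]{3}$, whence $\mathcal{M}^{\widehat{\omega}}_f(r)\le M_f(r^k)\le1$, proving the bound.

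For sharpness I would take $\widehat{\omega}_k(z)=z^k\in\mathcal{B}_k$ and, for $a\in(0,1)$, the automorphism $f=\varphi_a$ with $\varphi_a(z)=(a-z)/(1-az)\in\mathcal{B}$, whose Taylor coefficients are $a_0=a$ and $a_n=-(1-a^2)a^{n-1}$ for $n\ge1$. A short computation then yields
\begin{align*}
	\mathcal{M}^{\widehat{\omega}}_f(r)=a+(1-a^2)\frac{r^k}{1-ar^k}=1+(1-a)\,\frac{(1+2a)r^k-1}{1-ar^k}.
\end{align*}
If $r>1/\sqrt[k]{3}$, then $r^k>1/3$, and choosing $a<1$ sufficiently close to $1$ (precisely $a>(1-r^k)/(2r^k)$, which is feasible since that bound is $<1$) makes $(1+2a)r^k-1>0$ while $1-ar^k>0$; hence $\mathcal{M}^{\widehat{\omega}}_f(r)>1$. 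This shows the radius $1/\sqrt[k]{3}$ cannot be enlarged.

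I expect no serious obstacle here: the argument is essentially a change of radius $r\mapsto r^k$ on top of the classical theorem. The only points needing genuine care are the justification of the Schwarz-type estimate $|\widehat{\omega}_k(z)|\le|z|^k$ (this is exactly where the order-$k$ vanishing of $\widehat{\omega}_k$ enters) and the verification that in the extremal identity above the bracketed factor changes sign precisely at $r^k=1/3$ as $a\to1^-$, so that the threshold obtained matches $1/\sqrt[k]{3}$ exactly.
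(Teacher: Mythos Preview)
Your proof is correct and essentially parallel to the paper's: both use the Schwarz-type estimate $|\widehat{\omega}_k(z)|\le |z|^k$ and the same extremal pair $f_a(z)=(a-z)/(1-az)$, $\widehat{\omega}_k(z)=z^k$ for sharpness (your factor $(1+2a)r^k-1$ is exactly the paper's $M(a,r)$). The only cosmetic difference is that you invoke the classical Bohr inequality $M_f(s)\le 1$ for $s\le 1/3$ as a black box, whereas the paper re-derives the needed bound directly from the coefficient estimate $|a_n|\le 1-|a_0|^2$; your reduction is slightly more modular but not a genuinely different route.
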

\begin{proof}[\bf Proof of Lemma \ref{Lem-1.1}.]
	Since $ f(z)=\sum_{n=0}^{\infty}a_nz^n\in\mathcal{B} $, we must have $ |a_k|\leq 1-|a_0|^2 $ for integer $ n\geq 1 $(see \cite[P.35]{Graham-2003}).
	Let $ \widehat{\omega}_k\in\mathcal{B}_k $. By Scharz lemma, we have $ |\widehat{\omega}_k(z)|\leq |z|^k $ for all $ z\in\mathbb{D} $.  For $ z\in\mathbb{D} $ with $ |z|=r $, we see that 
	\begin{align*}
		\sum_{n=0}^{\infty}|a_n||\widehat{\omega}_k(z)|^n&\leq |a_0|+(1-|a_0|^2)\dfrac{r^k}{1-r^k}\\&=1+\left(1-|a_0|\right)\left(-1+(1+|a_0|)\frac{r^k}{1-r^k}\right).
	\end{align*}
	This gives that $ \sum_{n=0}^{\infty}|a_n||\widehat{\omega}_k(z)|^n\leq 1 $ for $ |z|=r\leq 1/\sqrt[k]{3} $. For sharpness, we consider the function $ f_a\in \mathcal{B} $ defined by 
	\begin{align}\label{eee-3.1}
		f_a(z):=\frac{a-z}{1-az}=a-\left(1-a^2\right)\sum_{n=1}^{\infty}a^{n-1}z^{n}=a_0+\sum_{n=1}^{\infty}a_nz^n,
	\end{align}
	where $ a_0=a $, $ a_n=-\left(1-a^2\right)a^{n-1} $ for some $a\in(0,1)$ and consider $ \widehat{\omega}_k(z)=z^k $. Taking $ |z|=r<1 $, we get the following estimate
	\begin{align*}
		\sum_{n=0}^{\infty}|a_n||\widehat{\omega}_k(z)|^n=a+\sum_{n=1}^{\infty}(1-a^2)a^{n-1}r^{kn}=1+\frac{(1-a)M(a, r)}{\left(1-ar^k\right)},
	\end{align*}
	where $ M(a, r):=-\left(1-ar^k\right)+\left(1+a\right)r^k $. It is easy to see that $ \sum_{n=0}^{\infty}|a_n||\widehat{\omega}_k(z)|^n>1 $ if, and only if, $ M(a, r)\geq 0 $ for $ r\geq 1/\sqrt[k]{3} $. In fact, we see that 
	\begin{align*}
		\lim\limits_{a\rightarrow 1^{-}}M(a, r)=-\left(1-r^k\right)+2r^k>0\; \mbox{for}\; r>\frac{1}{\sqrt[k]{3}}.
	\end{align*}
	This shows that the number $1/\sqrt[k]{3}$ is sharp and 
	the proof is complete . 
\end{proof}
We obtain the following lemma which generalizes that in \cite{Bombieri-1962}.
\begin{lem} \label{Lem-1.12}
	Suppose that $ f(z)=\sum_{n=0}^{\infty}a_nz^n\in\mathcal{B} $, and  $ \widehat{\omega}_{k}\in\mathcal{B}_{k}  $, $k\in\mathbb{N}$.  We have the sharp inequality:
	\begin{align*}
		\sum_{n=1}^{\infty}|a_n||\widehat{\omega}_k(z)|^n\leq 1\; \mbox{for}\; |z|=r\leq \dfrac{1}{\sqrt[2k]{2}}.
	\end{align*} 
	The number $1/\sqrt[2k]{2}$ cannot be improved.
\end{lem}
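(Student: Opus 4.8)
The plan is to deduce the inequality from a Cauchy--Schwarz estimate against the $\ell^{2}$ bound on the Taylor coefficients of $f$, and to establish sharpness with an explicit degree-two Blaschke product composed with $z\mapsto z^{k}$. For the inequality, recall first that since $f\in\mathcal B$, Parseval's identity (letting $|z|\to1^{-}$) gives $\sum_{n=0}^{\infty}|a_n|^{2}\le1$, so in particular $\sum_{n=1}^{\infty}|a_n|^{2}\le 1-|a_0|^{2}\le 1$. Since $\widehat\omega_k\in\mathcal B_k$ has a zero of order at least $k$ at the origin, the Schwarz lemma gives $|\widehat\omega_k(z)|\le|z|^{k}=r^{k}$; put $\rho:=|\widehat\omega_k(z)|$. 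Then by Cauchy--Schwarz,
\[
\sum_{n=1}^{\infty}|a_n|\rho^{n}\le\Bigl(\sum_{n=1}^{\infty}|a_n|^{2}\Bigr)^{1/2}\Bigl(\sum_{n=1}^{\infty}\rho^{2n}\Bigr)^{1/2}\le\Bigl(\frac{\rho^{2}}{1-\rho^{2}}\Bigr)^{1/2}=\frac{\rho}{\sqrt{1-\rho^{2}}}.
\]
Since $x\mapsto x/\sqrt{1-x^{2}}$ is increasing on $[0,1)$ and $\rho\le r^{k}\le 1/\sqrt2$ whenever $r\le 1/\sqrt[2k]{2}$, the right-hand side is at most $1$, which is the asserted bound.

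For sharpness, fix $\rho_{0}\in(1/\sqrt[2k]{2},1)$ and set $t:=\rho_{0}^{k}\in(1/\sqrt2,1)$. Choose $\widehat\omega_k(z)=z^{k}\in\mathcal B_k$ and the degree-two Blaschke product $g(z)=z\,(t-z)/(1-tz)\in\mathcal B$. Multiplying the expansion \eqref{eee-3.1} (with $a=t$) by $z$ shows that the Taylor coefficients $b_{n}$ of $g$ satisfy $|b_{1}|=t$ and $|b_{n}|=(1-t^{2})t^{n-2}$ for $n\ge2$. Evaluating at $z=\rho_{0}$, where $|\widehat\omega_k(z)|=\rho_{0}^{k}=t$, we obtain
\[
\sum_{n=1}^{\infty}|b_{n}|\,t^{n}=t^{2}+(1-t^{2})\sum_{n=2}^{\infty}t^{2n-2}=t^{2}+t^{2}=2t^{2}=2\rho_{0}^{2k}>1,
\]
the last step because $\rho_{0}>1/\sqrt[2k]{2}$. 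Hence the inequality fails at $|z|=\rho_{0}$, so $1/\sqrt[2k]{2}$ cannot be enlarged.

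The computations --- the coefficient expansion of $g$ and the two geometric sums --- are routine, so I do not expect a serious obstacle. The point needing care is the choice of extremal: the Möbius functions $f_a(z)=(a-z)/(1-az)$ used for Lemma \ref{Lem-1.1} are \emph{not} extremal here (one checks that they produce only the value $r^{k}<1$ at $|z|=r$ near $1/\sqrt[2k]{2}$), and one must instead use a disk automorphism multiplied by $z$, with the parameter tuned to $t=\rho_{0}^{k}$ so that the two pieces of the sum each equal $t^{2}$. It is precisely this balancing that pins the radius down to $1/\sqrt[2k]{2}$ rather than the weaker $1/\sqrt[k]{2}$ coming from the crude coefficient bound $|a_n|\le 1-|a_0|^{2}$.
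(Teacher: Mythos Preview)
Your proof is correct and matches the approach the paper points to: the paper's own argument for this lemma is only a one-line sketch (``use $|\widehat\omega_k(z)|\le|z|^k$ and apply the argument of Bombieri''), and the Cauchy--Schwarz/Parseval estimate you give is precisely the standard proof of Bombieri's $1/\sqrt{2}$ radius, so you have simply filled in the omitted details. Your sharpness example $g(z)=z(t-z)/(1-tz)$ with $t=\rho_0^{k}$ and $\widehat\omega_k(z)=z^{k}$ is also the natural extremal and the computation $2t^{2}>1$ is correct.
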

\begin{proof}[\bf Proof of Lemma \ref{Lem-1.12}.]
	Using the inequality $|\widehat{\omega}_k(z)|\leq |z|^k$ for $z\in\mathbb{D},$ where $\widehat{\omega}_k\in \mathcal{B}_k$ and $k\in \mathbb{N}$ and applying the similar proof of argument of Bombieri \cite{Bombieri-1962} (see also \cite{Liu-Ponnusamy-PAMS-2021}), the lemma can be proved easily. Hence we omit the details.
\end{proof}
The next lemma, we obtain as a generalization of (Theorem 2.1 \cite{Kayumov-Khammatova-Ponnusamy-JMAA-2021}) involving Schwarz functions.

\begin{lem}\label{BS-lem-1.2} Suppose that $ f(z)=\sum_{n=0}^{\infty}a_nz^n\in\mathcal{B} $, and $ \widehat{\omega}_{m_0}\in\mathcal{B}_{m_0} $, $ \widehat{\omega}_{k}\in\mathcal{B}_{k}  $, $m_0,k\in\mathbb{N}$. Then for $ p\in (0, 2] $ and $ N\in\mathbb{N} $, we have the sharp inequality:
	\begin{align*}
		\mathcal{I}^{\widehat{\omega}}_f(r):&=|f(\widehat{\omega}_{m_0}(z))|^p+\sum_{n=N}^{\infty}|a_n||\widehat{\omega}_k(z)|^n\leq 1\; \mbox{for}\; |z|=r\leq R^{k,N}_{p,m_0},
	\end{align*}
	where $ R^{k,N}_{p,m_0} $ is the unique root in $ (0, 1) $ of equation 
	\begin{align}\label{ee-1.1}
		2r^{kN}\left(1+r^{m_0}\right)-p\left(1-r^{m_0}\right)\left(1-r^k\right)=0.
	\end{align}
	The radius $ R^{k,N}_{p,m_0} $ is best possible.
\end{lem}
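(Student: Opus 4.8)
\textbf{Proof strategy for Lemma \ref{BS-lem-1.2}.} The plan is to run the argument of \cite{Kayumov-Khammatova-Ponnusamy-JMAA-2021} with the powers $z^{m}$ and $z$ replaced by the Schwarz functions $\widehat\omega_{m_0}$ and $\widehat\omega_k$ through the Schwarz lemma, so that the inequality collapses to a one-variable estimate in $a:=|f(0)|$ and $r$. First I would record the three standard inputs. Since $\widehat\omega_k\in\mathcal B_k$ and $\widehat\omega_{m_0}\in\mathcal B_{m_0}$, the Schwarz lemma gives $|\widehat\omega_k(z)|\le r^{k}$ and $|\widehat\omega_{m_0}(z)|\le r^{m_0}$ for $|z|=r<1$. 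The Schwarz--Pick lemma for $f\in\mathcal B$ gives $|f(w)|\le(a+|w|)/(1+a|w|)$, and since $s\mapsto(a+s)/(1+as)$ is increasing on $[0,1)$, this yields $|f(\widehat\omega_{m_0}(z))|\le(a+r^{m_0})/(1+ar^{m_0})$. Finally, the classical coefficient bound $|a_n|\le1-a^{2}$ for $n\ge1$ (see \cite{Graham-2003}) together with the monotonicity of $t\mapsto t^{N}/(1-t)$ on $(0,1)$ gives $\sum_{n=N}^{\infty}|a_n||\widehat\omega_k(z)|^{n}\le(1-a^{2})\,r^{kN}/(1-r^{k})$. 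Combining these, it suffices to prove
\begin{align*}
	\Phi(a,r):=\left(\frac{a+r^{m_0}}{1+ar^{m_0}}\right)^{p}+(1-a^{2})\,\frac{r^{kN}}{1-r^{k}}\le1\qquad\text{for all }a\in[0,1)
\end{align*}
whenever $0<r\le R^{k,N}_{p,m_0}$.

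Next I would apply the elementary inequality $x^{p}\le1-\tfrac{p}{2}(1-x^{2})$, valid for $p\in(0,2]$ and $x\in[0,1]$ (the function $1-\tfrac{p}{2}(1-x^{2})-x^{p}$ vanishes at $x=1$ and is non-increasing on $[0,1]$, as one checks by differentiating). Taking $x=(a+r^{m_0})/(1+ar^{m_0})$ and using the identity $1-x^{2}=(1-a^{2})(1-r^{2m_0})/(1+ar^{m_0})^{2}$, this gives
\begin{align*}
	\Phi(a,r)\le1-(1-a^{2})\left(\frac{p(1-r^{2m_0})}{2(1+ar^{m_0})^{2}}-\frac{r^{kN}}{1-r^{k}}\right).
\end{align*}
Because $1+ar^{m_0}\le1+r^{m_0}$, the parenthesised expression attains its minimum over $a\in[0,1]$ at $a=1$, where it equals $\dfrac{p(1-r^{m_0})}{2(1+r^{m_0})}-\dfrac{r^{kN}}{1-r^{k}}$. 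Hence $\Phi(a,r)\le1$ for every $a$ as soon as $2r^{kN}(1+r^{m_0})\le p(1-r^{m_0})(1-r^{k})$, i.e. $Y(r)\le0$, where $Y(r)$ denotes the left-hand side of \eqref{ee-1.1}. To locate the threshold, divide by $(1-r^{k})(1+r^{m_0})>0$: the equation $Y(r)=0$ becomes $2r^{kN}/(1-r^{k})=p(1-r^{m_0})/(1+r^{m_0})$, and on $(0,1)$ the left side increases strictly from $0$ to $+\infty$ while the right side decreases strictly from $p$ to $0$; so there is a unique root $R^{k,N}_{p,m_0}\in(0,1)$ and $Y(r)\le0$ exactly for $r\le R^{k,N}_{p,m_0}$. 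This establishes $\mathcal I^{\widehat\omega}_f(r)\le1$ for $|z|=r\le R^{k,N}_{p,m_0}$.

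For sharpness I would test the pair $f_a(z)=(a-z)/(1-az)\in\mathcal B$, $\widehat\omega_{m_0}(z)=-z^{m_0}\in\mathcal B_{m_0}$, $\widehat\omega_k(z)=z^{k}\in\mathcal B_k$, evaluated at $z=r$. Then $|f_a(\widehat\omega_{m_0}(r))|=(a+r^{m_0})/(1+ar^{m_0})$, and since $a_n=-(1-a^{2})a^{n-1}$ one obtains $\sum_{n=N}^{\infty}|a_n|r^{kn}=(1-a^{2})a^{N-1}r^{kN}/(1-ar^{k})$, so
\begin{align*}
	\mathcal I^{\widehat\omega}_{f_a}(r)=\left(\frac{a+r^{m_0}}{1+ar^{m_0}}\right)^{p}+\frac{(1-a^{2})a^{N-1}r^{kN}}{1-ar^{k}}.
\end{align*}
Putting $a=1-\varepsilon$ and expanding to first order in $\varepsilon$, this equals $1+\varepsilon\,Y(r)/\left[(1-r^{k})(1+r^{m_0})\right]+o(\varepsilon)$; since $Y(r)>0$ for $r>R^{k,N}_{p,m_0}$, the sum exceeds $1$ once $a$ is close enough to $1$, so $R^{k,N}_{p,m_0}$ cannot be enlarged.

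I expect the main obstacle to be the second step: choosing the correct elementary majorant for $x^{p}$ in the full range $p\in(0,2]$ so that, after simplifying $1-x^{2}$, the resulting sufficient condition is exactly $Y(r)\le0$ and the extremal case $a=1$ is manifest. The remaining pieces --- the Schwarz and Schwarz--Pick lemmas, the bound $|a_n|\le1-a^{2}$, and the monotonicity argument pinning down the root --- are routine. One further delicate point lies in the sharpness construction: one must take $\widehat\omega_{m_0}(z)=-z^{m_0}$ (equivalently, flip a sign in $f_a$), because only this choice produces the factor $1+r^{m_0}$ in the first-order term and so reproduces the polynomial $Y$.
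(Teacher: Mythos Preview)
Your proof is correct and, in its core step, cleaner than the paper's. Both arguments begin identically --- Schwarz lemma for $\widehat\omega_k$ and $\widehat\omega_{m_0}$, Schwarz--Pick for $f$, and $|a_n|\le 1-a^2$ --- to reduce the claim to showing $\Phi(a,r)\le 1$. From there, however, the paper works harder: it proves that $a\mapsto 1-\Phi(a,r)$ is decreasing on $[0,1]$ by computing $\Phi'$ and $\Phi''$, and must split into the cases $0<p\le 1$ (where $\Phi''\ge 0$ gives convexity) and $1<p\le 2$ (where an auxiliary function $\Psi_{p,m_0,a}(r)$ is introduced and shown to be increasing in $r$), checking $\Phi'(1)\le 0$ in each case. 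Your single inequality $x^p\le 1-\tfrac{p}{2}(1-x^2)$ for $p\in(0,2]$, $x\in[0,1]$, followed by the identity for $1-x^2$ and the obvious monotonicity in $a$, replaces this entire two-case derivative analysis in one stroke and lands on exactly the same threshold $Y(r)\le 0$; what it buys is a uniform treatment of all $p\in(0,2]$ with no case distinction. The sharpness arguments are essentially the same: the paper computes $\lim_{a\to 1^-}Q_{N,p,k}(a,r)$, you do the equivalent first-order expansion in $\varepsilon=1-a$; your remark that the sign choice $\widehat\omega_{m_0}(z)=-z^{m_0}$ is needed to produce the factor $1+r^{m_0}$ is well taken.
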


\begin{proof}[\bf Proof of Lemma \ref{BS-lem-1.2}]
	According to the assumption, $ f\in\mathcal{B} $, $ a:=|a_0|\in [0, 1) $ and $ \widehat{\omega}_k\in\mathcal{B}_k $ and $ \widehat{\omega}_{m_0}\in\mathcal{B}_{m_0} $. By the Schwarz lemma and Schwarz-Pick lemma, respectively, we obtain
	\begin{align*}
		|\widehat{\omega}_{m_0}(z)|\leq |z|^{m_0},\; |\widehat{\omega}_k(z)|\leq |z|^k\; \mbox{and}\; |f(z)|\leq \frac{|z|+a}{1+a|z|}\;\; \mbox{for}\;\; z\in\mathbb{D}.
	\end{align*}
	For $ \widehat{\omega}_k\in\mathcal{B}_k $ and $ \widehat{\omega}_{m_0}\in\mathcal{B}_{m_0} $, and as $ f\in\mathcal{B} $, we have $ |a_n|\leq 1-|a|^2 $ for $ n\geq 1 $ (see, \cite[p.35]{Graham-2003}). Thus it follows that 
	\begin{align*}
		|f\left(\widehat{\omega}_{m_0}(z)\right)|\leq \frac{|\widehat{\omega}_{m_0}(z)|+a}{1+a|\widehat{\omega}_{m_0}(z)|}\leq \frac{r^{m_0}+a}{1+ar^{m_0}}\; \mbox{for}\; |z|=r<1.
	\end{align*}
	A simple computation gives that
	\begin{align}\label{SB-eq-1.2}
		\mathcal{I}^{\widehat{\omega}}_f(r)&\leq \left(\frac{r^{m_0}+a}{1+ar^{m_0}}\right)^p+\sum_{n=N}^{\infty}|a_n|r^{kn}\leq \left(\frac{r^{m_0}+a}{1+ar^{m_0}}\right)^p+(1-a^2)\frac{r^{kN}}{1-r^k}\\&=1-\Phi_{p,m_0,k,N}(a),\nonumber
	\end{align}
	where
	\begin{align}\label{eee-3.4}
		\Phi_{p,m_0,k,N}(a):&=1-\left(\frac{r^{m_0}+a}{1+ar^{m_0}}\right)^p-(1-a^2)\frac{r^{kN}}{1-r^k}.
	\end{align}
	In order to establish the inequality $\mathcal{I}^{\widehat{\omega}}_f(r)\leq 1,$ it is sufficient to show that $ \Phi_{p,m_0,k,N}(a)\geq 0 $ for all $ a\in [0, 1] $. Note that $ \Phi_{p,m_0,k,N}(1)=0 $. We claim that $ \Phi_{p,m_0,k,N}(a) $ is a decreasing function of $ a $, under the condition of this theorem. A direct computation thus show that
	\begin{align*}
		\begin{cases}
			\Phi^{\prime}_{p,m_0,k,N}(a)=\displaystyle\frac{2ar^{kN}}{1-r^k}-p\left(1-r^{2m_0}\right)\frac{\left(r^{m_0}+a\right)^{p-1}}{\left(1+ar^{m_0}\right)^{p+1}},\vspace{2mm}\\
			\Phi^{\prime\prime}_{p,m_0,k,N}(a)=\displaystyle\frac{2r^{kN}}{1-r^k}-p\left(1-r^{2m_0}\right)\frac{\left(r^{m_0}+a\right)^{p-2}}{\left(1+ar^{m_0}\right)^{p+2}}\left(p-2-2ar^{m_0}-(p+1)r^{2m_0}\right).
		\end{cases}
	\end{align*}
	Evidently, $ \Phi^{\prime\prime}_{p,m_0,k,N}(a)\geq 0 $ for all $ a\in [0, 1] $, wherever $ 0<p\leq 1 $. Hence, for $ r\leq R^{k,N}_{p,m_0} $, we obtain that
	\begin{align*}
		\Phi^{\prime}_{p,m_0,k,N}(a)&\leq \frac{2r^{kN}\left(1+r^{m_0}\right)-p\left(1-r^{m_0}\right)\left(1-r^k\right)}{\left(1-r^k\right)\left(1+r^{m_0}\right)}\leq 0.
	\end{align*}
	Clearly, for each $ |z|=r\leq R^{k,N}_{p,m_0} $ and $ 0<p\leq 1 $, $ \Phi_{p,m_0,k,N}(a) $ is decreasing function of $ a\in [0, 1] $ which turns out that $ \Phi_{p,m_0,k,N}(a)\geq \Phi_{p,m_0,k,N}(1)=0 $ for all $ a\in [0, 1] $ and the desired inequality holds. Next, we show that condition $ \Phi^{\prime}_{p,m_0,k,N}(a)\leq 0 $ is also sufficient for the function $ \Phi_{p,m_0,k,N}(a) $ to be decreasing on $ [0, 1] $ in the case when $ 1<p\leq 2 $. To do this, we define a function 
	\begin{align*}
		\Psi_{p, m_0, a}(r):=\left(1+r^{m_0}\right)^2\frac{\left(r^{m_0}+a\right)^{p-1}}{\left(1+r^{m_0}a\right)^{p+1}}\;\; \mbox{for}\;\; r\in [0, 1).
	\end{align*}
	Our aim is to show that $ \Psi $ is an increasing function of $ r $ in $ [0, 1) $. An elementary computation yields that 
	\begin{align*}
		\Psi^{\prime}_{p, m_0, a}(r)=\left(1+r^{m_0}\right)\frac{\left(r^{m_0}+a\right)^{p-2}}{\left(1+ar^{m_0}\right)^{p+2}}H_{p, m_0, a}(r)\; \mbox{for} \; r\in [0, 1),
	\end{align*}
	where 
	\begin{align*}
		H_{p, m_0, a}(r):=(1-a)\big(r^{m_0}(1-a+p(1+a))+a(p+1)+p-1\big).
	\end{align*}
	Hence, for $ p>1 $ and $ a\in [0, 1] $, it follows that $ H(r)\geq 0 $ for all $ r\in [0, 1) $, and consequently, we have $ \Psi^{\prime}(r)\geq 0 $ for all $ r\in [0, 1) $. Thus we see that $ \Psi(r)\geq \Psi(0)=a^{p-1} $ for all $ r\in [0, 1) $ and for $ a\in [0, 1] $. Since $ 0\leq a^{2-p}\leq 1 $ for $ 1<p\leq 2 $, this observations helps to derive that for $ r\leq R^{k,N}_{p,m_0} $,
	\begin{align*}
		\Phi^{\prime}_{p,m_0,k,N}(a)&=\frac{2ar^{kN}}{1-r^k}-p\left(\frac{1-r^{m_0}}{1+r^{m_0}}\right)\Psi(r)\\&\leq a^{p-1}\left(\frac{2r^{kN}a^{2-p}}{1-r^k}-p\left(\frac{1-r^{m_0}}{1+r^{m_0}}\right)\right)\\&\leq a^{p-1}\left(\dfrac{2r^{kN}}{1-r^k}-p\left(\dfrac{1-r^{m_0}}{1+r^{m_0}}\right)\right)\\&=a^{p-1}\Phi^{\prime}_{p,m_0,k,N}(1)\\&\leq 0.
	\end{align*}
	Furthermore, $ \Phi_{p,m_0,k,N}(a) $ is a decreasing function of $ a\in [0, 1] $, wherever $ 1<p\leq 2 $ which implies that $ \Phi_{p,m_0,k,N}(a)\geq \Phi_{p,m_0,k,N}(1)=0 $ for all $a\in[0,1]$ and thus, the desired inequality holds. To show the radius $ R^{k,N}_{p,m_0} $ is sharp, we consider the function $f_a\in\mathcal{B}$ given by $\eqref{eee-3.1}$
	with $ \widehat{\omega}_{m_0}(z)=-r^{m_0} $, $ \widehat{\omega}_{k}(z)=z^k $. Taking $ |z|=r $, by a simple computation, we obtain
	\begin{align}\label{BS-eq-1.3}
		\mathcal{I}^{\widehat{\omega}}_{f_a}(r)=1+\frac{(1-a){Q}_{N,p,k}(a, r)}{(1-ar^k)(1+ar^{m_0})^p},
	\end{align}
	where
	\begin{align*}
		Q_{N,p,k}(a,r):&=(1-ar^{k})(1+ar^{m_0})^p\bigg(\left(\frac{1+a}{1-ar^k}\right)a^{N-1}r^{kN}\\&\quad-\frac{1}{(1-a)}\left(1-\left(\frac{r^{m_0}+a}{1+ar^{m_0}}\right)^p\right)\bigg).
	\end{align*}
	It is easy to see that the last expression on the right side of \eqref{BS-eq-1.3} is bigger than or equal to $ 1 $ if, and only if, $Q_{N,p,k}(a, r)\geq 0 $. In fact, for $ r>R^{k,N}_{p,m_0} $ and $ a $ close to $ 1 $, we see that 
	\begin{align*}
		\lim\limits_{a\rightarrow 1^{-}}Q_{N,p,k}(a, r)=(1+r^{m_0})^{p-1}\left(2r^{kN}(1+r^{m_0})-p(1-r^{m_0})(1-r^k)\right)>0,
	\end{align*}
	showing that the radius $ R^{k,N}_{p,m_0} $ is best possible. 
\end{proof}
Next, we establish the following lemma which is an improved version of that in \cite{Liu-Liu-Ponnusamy-2021} and it will help us to prove our result shortly.
\begin{lem}\label{BS-lem-1.1}
	Suppose that $ f(z)=\sum_{n=0}^{\infty}a_nz^n\in\mathcal{B} $, $ \widehat{\omega}_{m_0}\in\mathcal{B}_{m_0} $, $\widehat{\omega}_{k}\in\mathcal{B}_{k} $, $ m_0, k\in \mathbb{N} $. Then for $ p\in (0,2], $ we have the sharp inequality:
	\begin{align*}
		\mathcal{J}^{\widehat{\omega}}_f(r):&=|f(\widehat{\omega}_{m_0})|^p+\sum_{n=N}^{\infty}|a_n||\widehat{\omega}_k(z)|^n+sgn(t)\sum_{n=1}^{t}|a_n|^2\frac{|\widehat{\omega}_k(z)|^N}{1-|\widehat{\omega}_k(z)|}\\&\quad+\left(\frac{1}{1+|a_0|}+\frac{|\widehat{\omega}_k(z)|}{1-|\widehat{\omega}_k(z)|}\right)\sum_{n=t+1}^{\infty}|a_n|^2|\widehat{\omega}_k(z)|^{2n}\leq 1
	\end{align*}
	for $ |z|=r\leq R^{k,N}_{p,m_0} $, where $ R^{k,N}_{p,m_0} $ is the unique root in $ (0, 1) $ of equation  \eqref{ee-1.1} . The constant $ R^{k,N}_{p,m_0} $ cannot be improved. 
\end{lem}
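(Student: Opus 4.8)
The plan is to follow the architecture of the proof of Lemma~\ref{BS-lem-1.2}, isolating the two quadratic sums into one sharp coefficient inequality so that the monotonicity analysis of $\Phi_{p,m_0,k,N}$ already carried out there can be invoked unchanged. If $|a_0|=1$ then $f$ is a unimodular constant, every $a_n$ with $n\ge1$ vanishes, and $\mathcal{J}^{\widehat{\omega}}_f(r)=|f(\widehat{\omega}_{m_0}(z))|^p\le1$; so assume $a:=|a_0|\in[0,1)$. Put $\rho:=|\widehat{\omega}_k(z)|$ and $\sigma:=|\widehat{\omega}_{m_0}(z)|$. The Schwarz lemma gives $\rho\le r^{k}$ and $\sigma\le r^{m_0}$ when $|z|=r$, the Schwarz--Pick lemma gives $|f(\widehat{\omega}_{m_0}(z))|\le(\sigma+a)/(1+a\sigma)\le(r^{m_0}+a)/(1+ar^{m_0})$, and $f\in\mathcal{B}$ yields $|a_n|\le1-a^2$ for $n\ge1$ together with $\sum_{n\ge1}|a_n|^2\le1-a^2$ (see \cite[p.~35]{Graham-2003}).

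The heart of the argument is the sharp inequality
\begin{align*}
	\sum_{n=N}^{\infty}|a_n|\rho^{n}+sgn(t)\sum_{n=1}^{t}|a_n|^2\frac{\rho^{N}}{1-\rho}+\left(\frac{1}{1+a}+\frac{\rho}{1-\rho}\right)\sum_{n=t+1}^{\infty}|a_n|^2\rho^{2n}\le(1-a^2)\frac{\rho^{N}}{1-\rho},
\end{align*}
valid for all $\rho\in[0,1)$, which I would prove along the lines of \cite{Liu-Liu-Ponnusamy-2021} as a statement purely about the sequence $(|a_n|)$. The relevant features are: the choice $t=\lfloor(N-1)/2\rfloor$ forces $2t\le N-1$ and $2(t+1)\ge N$, so $\rho^{N}\le\rho^{2n}$ for $1\le n\le t$ while $\rho^{2n}\le\rho^{N}$ for $n\ge t+1$; the weight satisfies $\frac{1}{1+a}+\frac{\rho}{1-\rho}=\frac{1+a\rho}{(1+a)(1-\rho)}\le\frac{1}{1-\rho}$; one has $|a_n|^2\le(1-a^2)|a_n|$ and $(1-a^2)/(1+a)=1-a$; and, crucially, $\sum x_n^2\le(\sum x_n)^2$ for nonnegative $x_n$, which lets the genuine square terms be compared with the linear tail. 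The delicate step — and the main obstacle — is the bookkeeping that plays the surplus $\sum_{n\ge N}\bigl((1-a^2)-|a_n|\bigr)\rho^{n}$ in the linear tail against the quadratic corrections without being wasteful; one checks that the inequality is an equality for $|a_n|=(1-a^2)a^{n-1}$, separately for $N$ odd and $N$ even, which both confirms sharpness and reveals how tight each intermediate estimate must be kept.

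Granting this inequality, taking $\rho=|\widehat{\omega}_k(z)|$ and combining with the Schwarz--Pick bound gives
\begin{align*}
	\mathcal{J}^{\widehat{\omega}}_f(r)\le\left(\frac{\sigma+a}{1+a\sigma}\right)^p+(1-a^2)\frac{\rho^{N}}{1-\rho}\le\left(\frac{r^{m_0}+a}{1+ar^{m_0}}\right)^p+(1-a^2)\frac{r^{kN}}{1-r^{k}}=1-\Phi_{p,m_0,k,N}(a),
\end{align*}
the second inequality because $u\mapsto(u+a)/(1+au)$ and $s\mapsto s^{N}/(1-s)$ are increasing while $\sigma\le r^{m_0}$, $\rho\le r^{k}$, and $\Phi_{p,m_0,k,N}$ is precisely the function \eqref{eee-3.4} from the proof of Lemma~\ref{BS-lem-1.2}. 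That proof shows $\Phi_{p,m_0,k,N}(a)\ge\Phi_{p,m_0,k,N}(1)=0$ for every $a\in[0,1]$ exactly when $r\le R^{k,N}_{p,m_0}$, the root of \eqref{ee-1.1}; hence $\mathcal{J}^{\widehat{\omega}}_f(r)\le1$ for $|z|=r\le R^{k,N}_{p,m_0}$.

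For sharpness I would reuse the extremal configuration of Lemma~\ref{BS-lem-1.2}: $f=f_a$ as in \eqref{eee-3.1}, $\widehat{\omega}_{m_0}(z)=-r^{m_0}$, $\widehat{\omega}_k(z)=z^{k}$. Since $|a_n|^2=(1-a^2)^2a^{2n-2}\to0$ as $a\to1^-$, the two quadratic sums in $\mathcal{J}^{\widehat{\omega}}_{f_a}(r)$ tend to $0$, so $\mathcal{J}^{\widehat{\omega}}_{f_a}(r)$ and $\mathcal{I}^{\widehat{\omega}}_{f_a}(r)$ have the same limit as $a\to1^-$; the computation \eqref{BS-eq-1.3} then shows that this limit exceeds $1$ for every $r>R^{k,N}_{p,m_0}$, so the constant $R^{k,N}_{p,m_0}$ cannot be improved.
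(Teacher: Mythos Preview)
Your proof is correct and follows essentially the same route as the paper: bound $\mathcal{J}^{\widehat{\omega}}_f$ by the coefficient inequality of \cite{Liu-Liu-Ponnusamy-2021} (which the paper records separately as Lemma~\ref{lem-3.5} rather than sketching inline), reduce to the function $\Phi_{p,m_0,k,N}$ of Lemma~\ref{BS-lem-1.2}, and test sharpness on $f_a$ with $\widehat{\omega}_{m_0}(z)=-r^{m_0}$, $\widehat{\omega}_k(z)=z^k$. The only cosmetic difference is that the paper computes $\mathcal{J}^{\widehat{\omega}}_{f_a}(r)$ explicitly before letting $a\to1^-$, whereas you shortcut by observing that the quadratic terms vanish in that limit and invoking \eqref{BS-eq-1.3} directly.
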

The following lemma gives us the advantage to prove Lemma \ref{BS-lem-1.1}.  

\begin{lem}\cite{Liu-Liu-Ponnusamy-2021} \label{lem-3.5} 
	Suppose that $ f(z)=\sum_{n=0}^{\infty}a_nz^n\in\mathcal{B} $. Then for any N$\in\mathbb{N}$, the following inequality holds:
	\begin{align*}
		\sum_{n=N}^{\infty}|a_n|r^n&+sgn(t)\sum_{n=1}^{t}|a_n|^2\dfrac{r^N}{1-r}+\left(\dfrac{1}{1+|a_0|}+\dfrac{r}{1-r}\right)\sum_{n=t+1}^{\infty}|a_n|^2r^{2n}\\&\leq (1-|a_0|^2)\dfrac{r^N}{1-r}, 
	\end{align*}
	\;\;\mbox{for}\;\; $ r\in[0,1),$ where $t=\lfloor{(N-1)/2}\rfloor$.
\end{lem}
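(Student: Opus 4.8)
The plan is to prove the stated inequality in its equivalent \emph{deficit} form. Writing $a:=|a_0|$, $T:=1-a^2$ and $c:=\tfrac{1}{1+a}+\tfrac{r}{1-r}$, the assertion of Lemma \ref{lem-3.5} is equivalent to
\[
\frac{r^N}{1-r}\,\mathrm{sgn}(t)\sum_{n=1}^{t}|a_n|^2+c\sum_{n=t+1}^{\infty}|a_n|^2r^{2n}\ \le\ \sum_{n=N}^{\infty}\bigl(T-|a_n|\bigr)r^{n},
\]
the right-hand side being exactly the slack in the elementary Rogosinski-type estimate $\sum_{n\ge N}|a_n|r^n\le T\,r^N/(1-r)$. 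First I would record the two coefficient facts available for $f\in\mathcal{B}$: the pointwise bound $|a_n|\le T$ for $n\ge1$ (as in \cite[p.35]{Graham-2003}, already used in Lemma \ref{Lem-1.1}), which makes every term on the right nonnegative, and the Parseval ($H^2$) bound $\sum_{n\ge1}|a_n|^2\le T$ coming from $\|f\|_{\infty}\le1$.

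Next I would dispose of the low- and middle-range quadratic terms by an elementary weight comparison. Since $t=\lfloor(N-1)/2\rfloor$ one has $2(t+1)\ge N$, so $r^{2n}\le r^{N}$ for every $n\ge t+1$; combined with $c\le\tfrac{1}{1-r}$ (which holds because $\tfrac{1}{1+a}\le1$ for $a\ge0$, and $1+\tfrac{r}{1-r}=\tfrac{1}{1-r}$) this yields the clean inequality
\[
c\,r^{2n}\ \le\ \frac{r^{N}}{1-r}\qquad(n\ge t+1).
\]
Applying it on the indices $t+1\le n\le N-1$, together with the flat weight $\tfrac{r^N}{1-r}$ on $1\le n\le t$, lets me bound the entire low/middle quadratic contribution by $\tfrac{r^N}{1-r}\sum_{n=1}^{N-1}|a_n|^2$, which Parseval in turn controls by $\tfrac{r^N}{1-r}\bigl(T-\sum_{n\ge N}|a_n|^2\bigr)$. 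After this reduction the inequality collapses to a statement about the \emph{tail} $n\ge N$ alone.

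The heart of the argument, and the step I expect to be the main obstacle, is precisely this tail, where each coefficient feeds \emph{both} the linear sum $|a_n|r^n$ and the quadratic sum $c\,|a_n|^2r^{2n}$. Here the two crude facts above are genuinely insufficient: one can write down nonnegative sequences obeying $|a_n|\le T$ and $\sum|a_n|^2\le T$ that violate the target bound, so the proof must exploit that such coefficient patterns are \emph{not realizable} by a bounded analytic function. I would therefore fix $a$ and $r$ and reduce to the extremal configuration, showing that the left-hand side is maximized over $\{f\in\mathcal{B}:|a_0|=a\}$ by the Blaschke factor $f_a$ of \eqref{eee-3.1}; for $f_a$ the coefficients form the geometric sequence $a_n=-(1-a^2)a^{n-1}$, all the intermediate estimates become simultaneous equalities, and the tail inequality reduces to a single identity in the two real variables $a,r$. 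Establishing that $f_a$ is indeed the extremizer (equivalently, a convexity/monotonicity argument in coefficient space that respects realizability) is where the real computation concentrates.

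Finally, sharpness falls out of the same extremal: evaluating at $f_a$ and letting $a\to1^{-}$ drives $T\to0$ while the linear tail matches $T\,r^N/(1-r)$ to leading order and the quadratic corrections are of order $O(T^2)$, so equality is approached for \emph{every} $r\in[0,1)$, confirming that the bound $(1-|a_0|^2)r^N/(1-r)$ cannot be lowered. The degenerate case $t=0$ (that is, $N=1$), where the middle range is empty and $\mathrm{sgn}(t)=0$, serves as the base case and is handled by the very same tail analysis with $\sum_{n=t+1}^{\infty}$ replaced by $\sum_{n=1}^{\infty}$.
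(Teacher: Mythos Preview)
The paper does not prove Lemma~\ref{lem-3.5}: it is quoted from \cite{Liu-Liu-Ponnusamy-2021} and used as a black box in the proofs of Lemmas~\ref{BS-lem-1.1} and~\ref{Lem-1.5}. So there is no in-paper argument to compare against, and I assess your proposal on its own.

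Your reduction has a genuine gap. After you bound the low/middle block by $\tfrac{r^N}{1-r}\sum_{n=1}^{N-1}|a_n|^{2}$ and invoke Parseval to replace this by $\tfrac{r^N}{1-r}\bigl(T-\sum_{n\ge N}|a_n|^{2}\bigr)$, the residual ``tail'' inequality you are left with is false even for functions in $\mathcal{B}$. Take $f(z)=\varepsilon z^{N}$, so $a_0=0$, $T=1$, $c=\tfrac{1}{1-r}$. Your reduced inequality reads
\[
\frac{r^{N}}{1-r}\bigl(1-\varepsilon^{2}\bigr)+\frac{\varepsilon^{2}r^{2N}}{1-r}\ \le\ \sum_{n\ge N}(1-|a_n|)r^{n}=\frac{r^{N}}{1-r}-\varepsilon r^{N},
\]
and the left side exceeds the right by $\varepsilon r^{N}+O(\varepsilon^{2})>0$. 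The Parseval step throws away exactly the slack you need later. Your proposed remedy---``show that the Blaschke factor $f_a$ is the extremizer via a convexity/monotonicity argument that respects realizability''---is not a method but a restatement of the target; no mechanism (Schur parameters, variational argument, induction) is actually supplied.

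What the argument in \cite{Liu-Liu-Ponnusamy-2021} rests on is not Parseval but a \emph{coefficient-wise} refinement of $|a_n|\le 1-|a_0|^{2}$, of Carlson type: for $f\in\mathcal{B}$ and every $m\ge1$,
\[
|a_{2m+1}|\ \le\ 1-\sum_{k=0}^{m}|a_k|^{2},\qquad |a_{2m}|\ \le\ 1-\sum_{k=0}^{m-1}|a_k|^{2}-\frac{|a_m|^{2}}{1+|a_0|}.
\]
If you expand $\tfrac{r^{N}}{1-r}=\sum_{m\ge N}r^{m}$ and $\tfrac{r}{1-r}\,r^{2n}=\sum_{m>2n}r^{m}$ and compare coefficients of $r^{m}$ on both sides of the lemma, these two bounds are \emph{exactly} what is needed for each $m\ge N$; the cutoff $t=\lfloor(N-1)/2\rfloor$ and the factor $\tfrac{1}{1+|a_0|}$ in $c$ are precisely what make the index ranges and constants line up. No extremizer argument and no global $H^{2}$ estimate are required.
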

\begin{proof}[\bf Proof of Lemma \ref{BS-lem-1.1}]
	Firstly, we consider the first part of Lemma \ref{BS-lem-1.2}. By the Schwarz lemma and the Schwarz-Pick lemma, respectively, we obtain
	\begin{align*}
		|\widehat\omega_{m_0}(z)|\leq |z|^{m_0},\; |\widehat{\omega}_k(z)|\leq |z|^k\; \mbox{and}\; |f(\widehat\omega_{m_0}(z))|\leq \frac{r^{m_0}+a}{1+ar^{m_0}}\;\;\mbox{for}\;\;  z\in\mathbb{D}.
	\end{align*}
	In view of above inequalities and Lemma \ref{lem-3.5}, a simple computation shows that 
	\begin{align*}
		\mathcal{J}^{\widehat{\omega}}_f(r)&\leq \left(\frac{r^{m_0}+a}{1+ar^{m_0}}\right)^p+\sum_{n=N}^{\infty}|a_n|r^{kn}+sgn(t)\sum_{n=1}^{t}|a_n|^2\frac{r^{kN}}{1-r^k}\\&\quad+\left(\frac{1}{1+|a_0|}+\frac{r^k}{1-r^k}\right)\sum_{n=t+1}^{\infty}|a_n|^2r^{2kn}\\&\leq \left(\frac{r^{m_0}+a}{1+ar^{m_0}}\right)^p+(1-a^2)\frac{r^{kN}}{1-r^k}.
	\end{align*}
	The rest of the proof  follows from Lemma \ref{BS-lem-1.2} and hence the inequality $\mathcal{J}^{\widehat{\omega}}_f(r)\leq 1$  holds if, and only if, $r\leq R^{k,N}_{p,m_0}$, where $R^{k,N}_{p,m_0}$ is the minimum positive root in $(0,1)$ of the equation \eqref{ee-1.1}. For sharpness, we consider the function $ f_a $ is given by \eqref{eee-3.1} with $ \widehat{\omega}_{m_0}(z)=-r^{m_0} $, $ \widehat{\omega}_k(z)=z^k $. Taking $ |z|=r $, an easy computation gives that 
	\begin{align}\label{BS-eq-11.11}
		\mathcal{J}^{\widehat{\omega}}_{f_a}(r)&=\left(\frac{a+r^{m_0}}{1+ar^{m_0}}\right)^p+(1-a^2)\frac{r^{kN}a^{N-1}}{1-ar^k}+(1-a^2)(1-a^{2t})sgn(t)\\&\quad+\frac{(1+ar^k)(1-a^2)^2a^{2t}r^{2k(t+1)}}{(1+a)(1-r^k)(1-a^2r^{2k})}\nonumber\\&=1+\frac{(1-a)G^{N,k}_{p, m_0, t}(a,r)}{(1-ar^k)(1+ar^{m_0})^p},\nonumber
	\end{align} 
	where
	\begin{align*}
		&G^{N,k}_{p, m_0, t}(a,r)\\&:=(1-ar^k)(1+ar^{m_0})^p\bigg(\left(\frac{1+a}{1-ar^k}\right)a^{N-1}r^{kN}-\frac{1}{(1-a)}\left(1-\left(\frac{a+r^{m_0}}{1+ar^{m_0}}\right)^p\right)\bigg)\\&\quad+(1-ar^k)(1+ar^{m_0})^p\left((1+a)\left(1-a^{2t}\right)sgn(t)+\frac{\left(1-a^2\right)a^{2t}r^{2k(t+1)}}{\left(1-ar^k\right)\left(1-r^k\right)}\right).
	\end{align*}
	It is easy to see that the last expression on the right side of \eqref{BS-eq-11.11} is bigger than or equal to $ 1 $ if, and only if, $G^{N,k}_{p, m_0, t}(a,r)\geq 0 $. In fact, for $ r>R^{k,N}_{p,m_0}$, and $ a $ close to $ 1 $, we see that 
	\begin{align*}
		\lim\limits_{a\rightarrow 1^{-}} G^{N,k}_{p, m_0, t}(a,r)=(1-r^k)(1+r^{m_0})^p\left(\frac{2r^{kN}}{1-r^k}-p\left(\frac{1-r^{m_0}}{1+r^{m_0}}\right)\right)>0
	\end{align*}
	which implies that the constant $ R^{k,N}_{p,m_0}$ is best possible.
\end{proof}
The next lemma we establish involves Schwarz functions as a refined version of the Bohr inequality with the initial coefficient $|a_0|$ being zero. 
\begin{lem} \label{Lem-1.5}
	Suppose that $ f(z)=\sum_{n=1}^{\infty}a_nz^n\in\mathcal{B} $, and  $ \widehat{\omega}_{k}\in\mathcal{B}_{k},$ $k\in\mathbb{N}.$ We have the sharp inequality:
	\begin{align}\label{ee-3.5}
		\mathcal{L}^{\widehat{\omega}}_f(r):=\sum_{n=1}^{\infty}|a_n||\widehat{\omega}_k(z)|^n+\left(\frac{|\widehat{\omega}_k(z)|^{-1}}{1+|a_1|}+\frac{1}{1-|\widehat{\omega}_k(z)|}\right)\sum_{n=2}^{\infty}|a_n|^2|\widehat{\omega}_k(z)|^{2n}\leq1
	\end{align}	
	for $|z|=r\leq \sqrt[k]{3/5}.$ The number $\sqrt[k]{3/5}$ is cannot be improved.
\end{lem}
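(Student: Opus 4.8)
The plan is to reduce the statement of Lemma~\ref{Lem-1.5} to the one-variable refined Bohr inequality of Ponnusamy \emph{et al.} (Theorem~\ref{Thm-1.2}) by means of the Schwarz-type estimate $|\widehat{\omega}_k(z)|\leq|z|^k$. First I would record the basic coefficient estimates: since $f(z)=\sum_{n=1}^{\infty}a_nz^n\in\mathcal B$ has $a_0=0$, the Schwarz lemma gives $|a_1|\leq 1$ and, more generally, $\sum_{n=1}^{\infty}|a_n|^2\leq 1$; combined with $|a_1|\leq 1$ one deduces the sharp bounds $|a_n|\leq 1-|a_1|^2$ for $n\geq 2$ that underlie the one-variable argument. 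Setting $t:=|\widehat{\omega}_k(z)|$, the Schwarz lemma for $\widehat{\omega}_k\in\mathcal B_k$ yields $t\leq r^k$ where $|z|=r$, so every term appearing in $\mathcal{L}^{\widehat{\omega}}_f(r)$ is a monotonically increasing function of $t$ (note the coefficient $\tfrac{t^{-1}}{1+|a_1|}+\tfrac{1}{1-t}$ multiplies $\sum_{n\ge2}|a_n|^2t^{2n}$, and $t^{2n-1}$ is increasing in $t$). Hence it suffices to prove $\mathcal{L}^{\widehat{\omega}}_f(r)\leq 1$ with $t$ replaced throughout by $r^k$.

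\textbf{Key steps.} After this reduction, write $s:=r^k$; the quantity to be bounded becomes
\[
\sum_{n=1}^{\infty}|a_n|s^{n}+\left(\frac{1}{1+|a_1|}+\frac{s}{1-s}\right)\sum_{n=2}^{\infty}|a_n|^2 s^{2n-1},
\]
which is \emph{exactly} the left-hand side of the inequality in Theorem~\ref{Thm-1.2} evaluated at the point $s$. By Theorem~\ref{Thm-1.2} this is $\leq 1$ provided $s\leq 3/5$, i.e.\ provided $r^k\leq 3/5$, i.e.\ $r\leq\sqrt[k]{3/5}$. This establishes the asserted inequality on the stated disk.

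\textbf{Sharpness.} For the best-possibility of $\sqrt[k]{3/5}$ I would follow the standard template used already in Lemmas~\ref{Lem-1.1} and \ref{BS-lem-1.2}: take the extremal one-variable function built from $f_a(z)=\dfrac{a-z}{1-az}$ — more precisely its ``zero-initial-coefficient'' renormalisation $g_a(z):=\dfrac{z(a-z)}{1-az}\in\mathcal B$ used in the proof of Theorem~\ref{Thm-1.2} — together with the choice $\widehat{\omega}_k(z)=z^k$, so that $|\widehat{\omega}_k(z)|=r^k$ achieves equality in the Schwarz bound. Substituting these and letting $a\to 1^{-}$, a direct computation (parallel to \eqref{BS-eq-1.3} and \eqref{BS-eq-11.11}) shows that $\mathcal{L}^{\widehat{\omega}}_{g_a}(r)>1$ for every $r>\sqrt[k]{3/5}$, because the corresponding one-variable expression exceeds $1$ for $s=r^k>3/5$ by the sharpness half of Theorem~\ref{Thm-1.2}. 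This forces the radius $\sqrt[k]{3/5}$ to be optimal.

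\textbf{Main obstacle.} The only genuinely delicate point is the monotonicity reduction: one must check carefully that replacing $|\widehat{\omega}_k(z)|$ by the larger value $r^k$ only \emph{increases} $\mathcal{L}^{\widehat{\omega}}_f(r)$. The subtlety is the factor $|\widehat{\omega}_k(z)|^{-1}$ in the coefficient $\tfrac{t^{-1}}{1+|a_1|}+\tfrac{1}{1-t}$, which is \emph{decreasing} in $t$; however, it is paired with $\sum_{n\geq2}|a_n|^2 t^{2n}$, so the product $t^{-1}\sum_{n\ge 2}|a_n|^2 t^{2n}=\sum_{n\geq2}|a_n|^2 t^{2n-1}$ is increasing in $t$ since $2n-1\geq 3>0$; once this is observed the whole functional is increasing in $t$ and the reduction goes through. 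Beyond this, everything reduces to quoting Theorem~\ref{Thm-1.2}, so no further hard analysis is needed.
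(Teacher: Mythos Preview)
Your reduction is correct and is essentially the paper's own argument: both proofs use the Schwarz bound $|\widehat{\omega}_k(z)|\le r^k$ and monotonicity to replace $|\widehat{\omega}_k(z)|$ by $s:=r^k$, landing on the one-variable refined Bohr inequality for functions with $a_0=0$. The only difference is packaging: the paper does not quote Theorem~\ref{Thm-1.2} but re-proves it inline, writing $f(z)=z\sum_{n\ge0}b_nz^n$, applying Lemma~\ref{lem-3.5} (with $N=1$) to $\sum b_nz^n$ at the point $|\widehat{\omega}_k(z)|$, multiplying through by $|\widehat{\omega}_k(z)|$, and then maximising $F_k(a,r)=-1+ar^k+(1-a^2)\dfrac{r^{2k}}{1-r^k}$ over $a\in[0,1]$ to get the threshold $r^k\le 3/5$. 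Your shortcut of simply invoking Theorem~\ref{Thm-1.2} at $s=r^k$ is cleaner and entirely legitimate.

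There is, however, one genuine slip in your sharpness paragraph: the limit $a\to 1^{-}$ does \emph{not} work here, unlike in Lemmas~\ref{Lem-1.1} and \ref{BS-lem-1.2}. As $a\to 1^{-}$ the coefficients $a_n=-(1-a^2)a^{n-2}$ ($n\ge 2$) all tend to $0$, so $g_a(z)\to z$ and $\mathcal{L}^{\widehat{\omega}}_{g_a}(r)\to r^k<1$ for every $r<1$. The extremal parameter for Theorem~\ref{Thm-1.2} is interior: from the paper's own computation the maximiser of $F_k(\cdot,r)$ is $a=(1-r^k)/(2r^k)$, which at the critical radius $r^k=3/5$ equals $a=1/3$. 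So either drop the phrase ``letting $a\to 1^{-}$'' and simply invoke the sharpness half of Theorem~\ref{Thm-1.2} (which is sufficient, since with $\widehat{\omega}_k(z)=z^k$ your functional equals the one-variable expression at $s=r^k$), or carry out the direct computation with the correct value of $a$.
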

\begin{proof}[\bf Proof of Lemma \ref{Lem-1.5}]
	Suppose that $f(z)=\sum_{n=1}^{\infty}a_nz^n,$ where $|f(z)|\leq 1$  and $\widehat{\omega}_k\in \mathcal{B}_k$ for $z\in \mathbb{D}.$ We remark that the function $f$ can be represented as $f(z)=z\sum_{n=0}^{\infty}b_nz^n,$ where $b_n=a_{n+1},\; n\geq 0.$ By Schwarz lemma, we have $ |\widehat{\omega}_k(z)|\leq |z|^k $ for all $ z\in\mathbb{D} $. For the function $\sum_{n=0}^{\infty}b_nz^n\in \mathcal{B},$ $z\in\mathbb{D},$ using  Lemma \ref{lem-3.5} for $N=1,$ a simple computation shows that 
	\begin{align}\label{BS-eq-3.5}
		|b_0|&+\sum_{n=1}^{\infty}|b_n||\widehat{\omega}_k(z)|^n+\left(\frac{1}{1+|b_0|}+\frac{|\widehat{\omega}_k(z)|}{1-|\widehat{\omega}_k(z)|}\right)\sum_{n=1}^{\infty}|b_n|^2|\widehat{\omega}_k(z)|^{2n}\\&\leq \nonumber |b_0|+\left(1-|b_0|^2\right)\frac{|\widehat{\omega}_k(z)|}{1-|\widehat{\omega}_k(z)|}.
	\end{align}
	Multiplying both sides of \eqref{BS-eq-3.5} by $ |\widehat{\omega}_k(z)| $, we get
	\begin{align*}
		|b_0||\widehat{\omega}_k(z)|&+\sum_{n=1}^{\infty}|b_n||\widehat{\omega}_k(z)|^{n+1}+\left(\frac{1}{1+|b_0|}+\frac{|\widehat{\omega}_k(z)|}{1-|\widehat{\omega}_k(z)|}\right)\sum_{n=1}^{\infty}|b_n|^2|\widehat{\omega}_k(z)|^{2n+1}\\&\leq  |b_0||\widehat{\omega}_k(z)|+\left(1-|b_0|^2\right)\frac{|\widehat{\omega}_k(z)|^2}{1-|\widehat{\omega}_k(z)|}.
	\end{align*}
	Evidently, for $ |a_1|=|b_0|=a $ and $|z|=r$, we have 
	\begin{align}\label{BS-eq-3.6}
		&\sum_{n=1}^{\infty}|a_{n}||\widehat{\omega}_k(z)|^{n}+\left(\frac{1}{1+|a_1|}+\frac{|\widehat{\omega}_k(z)|}{1-|\widehat{\omega}_k(z)|}\right)\sum_{n=2}^{\infty}|a_{n}|^2|\widehat{\omega}_k(z)|^{2n-1}\\&\leq\nonumber ar^k+\left(1-a^2\right)\frac{r^{2k}}{1-r^k}\\&=1+F_k(a,r),\nonumber
	\end{align}
	where
	\begin{align*}
		F_k(a, r):=-1+ar^k+\left(1-a^2\right)\frac{r^{2k}}{1-r^k}.
	\end{align*}
	Differentiating w.r.t. `$ a $', we obtain
	\begin{align*}
		F^{\prime}_k(a, r)=r^k-\frac{2r^{2k}b}{1-r^k}\; \mbox{and}\; F^{\prime\prime}_k(a, r)=-\frac{2r^{2k}}{1-r^k}.
	\end{align*}
	It is easy to see that $ a=(1-r^k)/2r^k $ is critical point of $ F_k(a, r) $, and the maximum value of $F_k(a, r) $ is attained at $ a=(1-r^k)/2r^k $. Thus, an easy computation shows that
	\begin{align*}
		F_k(a, r)\leq F_k\left(\frac{1-r^k}{2r^k}, r\right)=\frac{\left(1+r^k\right)\left(5r^k-3\right)}{4\left(1-r^k\right)}\leq 0\; \mbox{for}\; r\leq \sqrt[k]{\frac{3}{5}}.
	\end{align*}
	Therefore, the desired inequality
	\begin{align*}
		\sum_{n=1}^{\infty}|a_n||\widehat{\omega}_k(z)|^n+\left(\dfrac{|\widehat{\omega}_k(z)|^{-1}}{1+|a_1|}+\frac{1}{1-|\widehat{\omega}_k(z)|}\right)\sum_{n=2}^{\infty}|a_n|^2|\widehat{\omega}_k(z)|^{2n}\leq1
	\end{align*}	
	holds for $|z|=r\leq \sqrt[k]{3/5}.$\vspace{1.2mm} 
	
	In order to show the constant $\sqrt[k]{3/5}$ is sharp, we consider the function $ f^*_a $ defined by 
	\begin{align}\label{ee-3.9}
		f^*_a(z)=z\left(\frac{a-z}{1-az}\right)=az-\left(1-a^2\right)\sum_{n=1}^{\infty}a^{n-1}z^{n+1}=a_1z+\sum_{n=2}^{\infty}a_nz^n,
	\end{align}
	where $ a_1=a $, $ a_n=-\left(1-a^2\right)a^{n-2}, $ $n\geq2$ and consider $ \widehat{\omega}_k(z)=z^k $.\vspace{1.2mm} 
	
	With this, taking $ |z|=r $, by a simple computation,  $\mathcal{L}^{\widehat{\omega}}_{f^*_a}(r)$ can be expressed as
	\begin{align*}
		&\sum_{n=1}^{\infty}|a_n||\widehat{\omega}_k(z)|^n+\left(\frac{1}{1+|a_1|}+\frac{|\widehat{\omega}_k(z)|}{1-|\widehat{\omega}_k(z)|}\right)\sum_{n=2}^{\infty}|a_n|^2|\widehat{\omega}_k(z)|^{2n-1}\\&=ar^k+\left(1-a^2\right)\frac{r^{2k}}{1-r^k}+\frac{\left(1+ar^k\right)\left(1-a^2\right)^2r^{3k}}{(1+a)\left(1-r^k\right)\left(1-a^2r^{2k}\right)}\\&=ar^k+\left(1-a^2\right)\frac{r^{2k}}{1-r^k}.
	\end{align*}
	Consequently, the claimed sharpness is shown by comparing this expression to the right-hand side of the expression in the formula $\eqref{BS-eq-3.6}$. Hence, we omit the details.
\end{proof}
\section{Proof of the main results}
\begin{proof}[\bf Proof of Theorem \ref{th-2.1}]
	To obtain the inequality $ \eqref{ee-2.1} $, we convert the multidimensional power series \eqref{ee-1.3} into the power series of one complex variable, and then we will use Lemma \ref{Lem-1.1}.  Let $ 	\Gamma=\{z=(z_1,z_2,\dots,z_n): z_j=b_jh, j=1,2,\dots,n;h\in\mathbb{D}\} $	be a complex line. Then, in each section of the domain $Q$ by the line $\Gamma$ the series \eqref{ee-1.3} covert into the following power series of complex variable $h:$
	\begin{align*}
		f(bh)=\sum_{n=0}^{\infty}P_n(b)h^n=f(0)+\sum_{n=1}^{\infty}P_n(b)h^n.
	\end{align*}
	Since $|f(bh)|<1$ for $h\in\mathbb{D}$ and $|f(0)|<1,$  by the Lemma \ref{Lem-1.1}, we obtain 
	\begin{align}\label{ee-3.13}
		\sum_{n=0}^{\infty}{|P_n(b)||\widehat{\omega}_{k}(h)|^n}\leq1 
	\end{align} 
	for $z$ in the section $\Gamma\cap(1/\sqrt[k]{3})Q.$ Since $\Gamma$ is an arbitrary complex line passing through the origin, the inequality \eqref{ee-3.13} is just same as the inequality \eqref{ee-2.1}.\vspace{1.2mm} 
	
	For the sharpness of the constant  $1/\sqrt[k]{3},$ let the domain $Q$ be convex. Then $Q$ is an intersection of half spaces 
	\begin{align*}
		Q=\bigcap_{b\in J}\{z=(z_1,z_2,\dots,z_n):Re(b_1z_1+\dots+b_nz_n)<1\}
	\end{align*}
 for some $J.$ Since $Q$ is a circular domain, we obtain $ 	Q=\bigcap_{b\in J}\{z=(z_1,z_2,\dots,z_n):|b_1z_1+\dots+b_nz_n|<1\}. $\vspace{1.2mm} 
 
 To show that the constant $1/\sqrt[k]{3}$ is the best possible, it is sufficient to show that $1/\sqrt[k]{3}$ cannot be improved for each domain $ 	R_b:=\{z=(z_1,z_2,\dots,z_n):|b_1z_1+\dots+b_nz_n|<1\}.$ In view of the Lemma \ref{Lem-1.1} for some $b\in[0,1),$  there exist a function $f_b:\mathbb{D}\rightarrow\mathbb{D}$ defined by \eqref{eee-3.1} with $|f_b(z)|<1$ for $z\in\mathbb{D}$  but for every $|z|=r>1/\sqrt[k]{3},$ it is easy to see that $\mathcal{M}^{\widehat{\omega}}_f(r)\leq1$ fails to hold in the disk $\mathbb{D}_r=\{z:|z|<r\}.$\vspace{1.2mm} 
 
 On the other hand, we consider the function $g:R_b \rightarrow \mathbb{D}$  defined by $g(z):=b_1z_1+\dots+b_nz_n.$ By performing an elementary computation, it can be shown that the function $f(z)=(f_b\;\circ\; g)(z^k)$ gives the sharpness of the constant $1/\sqrt[k]{3}$ for each domain  $R_b$. This completes the proof of the theorem. \vspace{1.2mm}
\end{proof}
\begin{proof}[\bf Proof of Theorem \ref{th-2.2}]
	By similar argument of the proof of Theorem \ref{th-2.1}, the result can be proof easily and hence, we omitted the details.
\end{proof}
\begin{proof}[\bf Proof of Theorem \ref{th-2.3}]
	In view of Lemma \ref{BS-lem-1.2} and the analogues proof of Theorem \ref{th-2.1}, the inequality \eqref{ee-2.3} can be obtain easily in the homothetic domain $(R_{p,m_0}^{k,N})Q,$ 	where $ R^{k,N}_{p,m_0} $ is the minimum root in $ (0, 1) $ of equation 
	\eqref{ee-1.4}. To prove the constant $R^{k,N}_{p,m_0}$ is best possible when $Q$ is convex, in view of the analogue proof of Theorem \ref{th-2.1}, it is enough to show that $R^{k,N}_{p,m_0}$ cannot be improved for each domain  $R_b.$ In fact, we see that the function  $f(z)=(f_b\;\circ\; g)(z^k)$ gives the sharpness of the constant $R^{k,N}_{p,m_0}$  in each domain $R_b.$ This completes proof of the theorem.   
\end{proof}	
\begin{proof}[\bf Proof of Theorem \ref{th-2.4}]
	The proof easily follows from Lemma \ref{BS-lem-1.1} and the analogue proof of Theorem \ref{th-2.3}. Hence, we omit the details. 
\end{proof}
\begin{proof}[\bf Proof of Theorem \ref{th-2.5}]
	By means of \eqref{ee-3.5} of Lemma \ref{Lem-1.5}, using the analogou proof of Theorem  \ref{th-2.1}, the inequality \eqref{ee-2.6} can be easily obtain  in the homothetic domain $(\sqrt[k]{3/5})Q.$ However,  prove the constant $\sqrt[k]{3/5}$ is best possible when $Q$ is convex, in view of the analogue proof of Theorem \ref{th-2.1}, it is enough to show that $\sqrt[k]{3/5}$ cannot be improved for each domain  $R_b.$ Here also, we see that the function  $f(z)=(f^*_b\;\circ\; g)(z^k)$ gives the sharpness of the constants $\sqrt[k]{3/5}$  in each domain $R_b,$ where the function $f^*_b:\mathbb{D}\rightarrow\mathbb{D}$ is given by \eqref{ee-3.9} for some $b\in (0,1)$. This completes proof of the theorem.   
\end{proof}

\noindent\textbf{Conflict of interest:} The authors declare that there is no conflict  of interest regarding the publication of this paper.\vspace{1.5mm}

\noindent\textbf{Data availability statement:}  Data sharing not applicable to this article as no datasets were generated or analysed during the current study.\vspace{1.5mm}

\end{document}